\documentclass[leqno,onefignum,onetabnum]{siamart0516}
\usepackage{url}

\usepackage{comment}
\usepackage{graphicx}
\usepackage{amsmath,amssymb,amsfonts,amstext,mathrsfs}
\usepackage{thmtools}
\usepackage{thm-restate}
\usepackage{latexsym,epstopdf}
\graphicspath{{./figs/}}
\usepackage{cleveref}
\crefname{assum}{assumption}{assumptions}
\Crefname{assumption}{Assumption}{Assumptions}
\allowdisplaybreaks

\usepackage{dsfont}

\usepackage{algorithm}
\usepackage{algorithmicx}
\usepackage{algpseudocode}
\usepackage{caption}
\usepackage{subcaption}
\usepackage{color} 
\usepackage{todonotes}
\usepackage{enumitem}

\newtheorem{assum}{Assumption}
\newtheorem{remark}{Remark}

\newcommand{\pap}{PAP\xspace}
\usepackage{xspace}

\newcommand{\eg}{{e.g.}\xspace}
\newcommand{\ie}{{i.e.}\xspace}

\newcommand{\cf}{{cf.}\xspace}
\newcommand{\wrt}{{w.r.t.}\xspace}
\newcommand{\aka}{{a.k.a.}\xspace}

\newcommand{\ab}{\mathbf{a}}

\newcommand{\xb}{\mathbf{x}}
\newcommand{\yb}{\mathbf{y}}
\newcommand{\zb}{\mathbf{z}}

\newcommand{\ub}{\mathbf{u}}

\newcommand{\RR}{\mathds{R}}
\newcommand{\dist}{\mathrm{dist}}
\newcommand{\prox}[2][\eta]{\mathrm{prox}_{#2}^{#1}}
\newcommand{\proj}[1]{\mathrm{proj}_{#1}}
\newcommand{\dom}{\mathop{\mathrm{dom}}}
\newcommand{\zero}{\mathbf{0}}
\newcommand{\crit}{\mathop{\mathrm{crit}}}
\newcommand{\argmin}{\mathop{\mathrm{argmin}}}
\newcommand{\KL}{K\L\xspace}
\newcommand{\inner}[2]{\langle #1, #2 \rangle}
\newcommand{\ovset}[2]{\overset{\mathrm{#1}}{#2}}
\newcommand{\dind}{k}
\newcommand{\mspg}{\textsf{m-PAPG}\xspace}

\title{Distributed Proximal Gradient Algorithm for Partially Asynchronous Computer Clusters}

\author{Yi Zhou\thanks{Syracuse University. (\email{yzhou35@syr.edu}, \email{yliang06@syr.edu}).}  
   \and  Yaoliang Yu\thanks{University of Waterloo. (\email{yaoliang.yu@uwaterloo.ca}).} 
   \and  Wei Dai\thanks{Carnegie Mellon University. (\email{wdai@cs.cmu.edu}, \email{epxing@cs.cmu.edu}).} 
   \and  Yingbin Liang%
   \footnotemark[1]
   \and  Eric P. Xing%
   \footnotemark[3]
   \thanks{The material in this paper is presented in part at the 19th International Conference on Artificial Intelligence and Statistics (AISTATS), Cadiz, Spain, 2016.
   }  
}

\begin{document}
\maketitle

\begin{abstract}
With ever growing data volume and model size, an error-tolerant, communication efficient, yet versatile distributed algorithm has become vital for the success of many large-scale machine learning applications. In this work we propose \mspg, an implementation of the flexible proximal gradient algorithm in model parallel systems equipped with the partially asynchronous communication protocol. The worker machines communicate asynchronously with a controlled staleness bound $s$ and operate at different frequencies. We characterize various convergence properties of \mspg: 1) Under a general non-smooth and non-convex setting, we prove that every limit point of the sequence generated by \mspg is a critical point of the objective function; 2) Under an error bound condition, we prove that the function value decays linearly for every $s$ steps; 3) Under the Kurdyka-${\L}$ojasiewicz inequality, we prove that the sequences generated by \mspg converge to the same critical point, provided that a proximal Lipschitz condition is satisfied.
\end{abstract}

\begin{keywords}
	Proximal gradient, distributed system, model parallel, partially asynchronous, machine learning
\end{keywords}
\begin{AMS}\end{AMS}

\pagestyle{myheadings}
\thispagestyle{plain}
\markboth{Zhou, Yu, Dai, Liang, and Xing}{Distributed Proximal Gradient Algorithm for Partially Asynchronous Computer Clusters}


\section{Introduction}\label{sec: intro}
The composite minimization problem
\begin{align}\label{com_min}
\min_{\xb\in \RR^d}~ f(\xb) + g(\xb)
\end{align}
has drawn a lot of recent attention due to its ubiquity in machine learning and statistical applications.
Typically, the first term 
\begin{align}\label{finitesum}
f(\xb) := \frac{1}{n}\sum_{i=1}^n f_i(\xb)
\end{align} 
is a smooth loss function over $n$ training samples that describes the fitness to data, and the second term $g$ is a nonsmooth regularization function that encodes \emph{a priori} information. We list below some popular examples under this framework.
\begin{itemize}[leftmargin=20pt,topsep=3pt,itemsep=3pt]
\item Lasso: least squares loss $f_i(\xb) = (y_i - \ab_i^\top \xb)^2$ and $\ell_1$ norm regularizer $g(\xb) = \|\xb\|_1$;
\item Logistic regression: logistic loss $f_i = \log(1+\exp(-y_i \ab_i^\top \xb_i))$;
\item Boosting: exponential loss $f_i(\xb) = \exp(-y_i \ab_i^\top\xb)$;
\item Support vector machines: hinge loss $f_i(\xb) =\max\{0, 1-y_i\ab_i^\top \xb\}$ and (squared) $\ell_2$ norm regularizer $g(\xb) = \|\xb\|_2^2$.
\end{itemize}
Over the years there is also a rising interest in using nonconvex losses $f$ (mainly for robustness against outlying observations) \cite{Collobert06,WuLiu07,Xuetal06,YuZMX15} and nonconvex regularizers $g$ (mainly for smaller bias in statistical estimation) \cite{FanLi01,ZhangZhang12}.

Due to the apparent importance of the composite minimization framework and the rapidly growing
size in both dimension ($d$) and volume ($n$) of data, there is a strong need to develop a practical \emph{parallel} system that can solve the problem in \eqref{com_min} efficiently and in a scale that is impossible for a single machine \cite{distrib_delay,BT_parallel, hadoop, delay_pga_linear, ssp_parameter, parameter_server,graphlab, spark}.
Existing systems can be categorized by how communication among worker machines is managed: bulk synchronous (also called fully synchronous) \cite{hadoop,bsp,spark}, totally asynchronous \cite{Baudet78,BT_parallel,graphlab}, and partially asynchronous (\aka stale synchronous or chaotic) \cite{distrib_delay,BT_parallel,ChazanMiranker69,delay_pga_linear,ssp_parameter,parameter_server,Tseng_linear_PAA}. Bulk synchronous parallel (BSP) systems explicitly force synchronization barriers so that the worker machines can stay on the same page to ensure correctness.
However, in a real deployed parallel system, BSP usually suffers from the straggler problem, that is, the performance of the whole system is bottlenecked at the bandwidth of communication and the \emph{slowest} worker machine. 
On the other hand, totally asynchronous systems do not put any constraint on synchronization, hence achieve much greater throughputs by potentially sacrificing the correctness of the algorithm. 
Partially asynchronous parallel (\pap) systems \cite{BT_parallel,ChazanMiranker69} are a compromise between the previous two: it allows the worker machines to communicate asynchronously up to a controlled staleness and to perform updates at different paces. \pap is particularly suitable for machine learning applications, where iterative algorithms that are robust to small computational errors are usually favored for finding an appropriate solution. Due to its flexibility, the \pap mechanism has been the method of choice in many recent practical implementations \cite{distrib_delay,delay_pga_linear,ssp_parameter,parameter_server,asy_coord_prox,hogwild}.

Existing parallel systems can also be categorized by how computation is divided among worker machines: data parallel and model parallel. Data parallel systems usually distribute the computation involving each component function $f_i$ in \eqref{finitesum} into different worker machines, which is suitable when $n \gg d$, \ie, large data volume but moderate model size. In this setting the stochastic proximal gradient algorithm, along with the \pap protocol, has been shown to be quite effective in solving the composite problem \eqref{com_min} \cite{distrib_delay,delay_pga_linear,ssp_parameter,parameter_server}. In this work, we focus on the ``dual'' model parallel regime where $d \gg n$, \ie, large model size but moderate data volume. 
In modern machine learning and statistics applications, it is not uncommon that the dimensionality of data largely exceeds its volume, for example, in computational biology, conducting an experimental study that involves many patients can be very expensive but for each patient, technology (\eg next-generation genome sequencing) has advanced to a stage where taking a large number of measurements (model parameters) is relatively cheap. Deep neural networks are another example that calls for model parallelism.
Not surprisingly, the design of a model parallel system is fundamentally different from that of a data parallel system, and so is the subsequent analysis. 

To achieve model parallelism, the model $\xb$ is partitioned into different (disjoint) blocks and is distributed among many worker machines. In this setting, 
the block proximal gradient algorithm has been proposed to solve the composite problem \eqref{com_min} \cite{FercoqRichtarik15, LuXiao15, random_block_coord}, although under the more restrictive BSP protocol.
Under the \pap protocol, the only work that we are aware of is \cite{BT_parallel} which focused on a special case of \eqref{com_min} where $g$ is an indicator function of a convex set, and \cite{Tseng_linear_PAA} which established a periodic linear rate of convergence under an error bound condition. 
Our main goal in this work is to provide a formal convergence analysis of the model parallel  proximal gradient algorithm under the more flexible \pap communication protocol, and our results naturally extend those in \cite{BT_parallel,Tseng_linear_PAA} to allow nonsmooth and nonconvex functions.

Our main contributions in this work are: 1). We propose \mspg, an extension of the proximal gradient algorithm to the model parallel and partially asynchronous setting.
2). We provide a rigorous analysis of the convergence properties of \mspg, allowing both \emph{nonsmooth} and \emph{nonconvex} functions. In particular, we prove in \Cref{thm:limitcon_1} that any limit point of the sequences generated by \mspg is a critical point. 
3) Under an additional error bound condition, we prove in \Cref{coro: s step linear} that the function values generated by \mspg decays periodically linearly.
4) Lastly, using the Kurdyka-${\L}$ojasiewicz (\KL) inequality \cite{prox_alter_linear}, we prove in \Cref{thm:finite} that for functions that satisfy a proximal Lipschitz condition the whole sequences of \mspg converge to a single critical point.

This paper proceeds as follows: We first %
set up the notations and definitions in \Cref{sec: Pre}. The proposed algorithm \mspg is presented in \Cref{sec: models}, and convergence analysis are detailed in \Cref{sec: limit point,sec: EB,sec: KL}. \Cref{sec: conclude} concludes our work.


\section{Preliminaries}\label{sec: Pre}

We first recall some fundamental definitions that will be needed in our analysis. Throughout, $h:\RR^d \to (-\infty, +\infty]$ denotes an extended real-valued function that is proper and closed, \ie, its domain $\dom h := \{ \xb: h(\xb) < +\infty \}$ is nonempty and its sublevel set $\{ \xb: h(\xb) \leq \alpha \}$ is closed for all $\alpha\in\RR$. Since the function $h$ may not be smooth or convex, we need the following generalized notion of ``derivative.''
\begin{definition}[Subdifferential and critical point, \eg \cite{vari_ana}]
\label{def:sub}
The Frech\'et subdifferential $\hat\partial h$ of $h$ at $\xb\in\dom h$ is the set of $\ub$ such that
\begin{align}
\liminf_{\zb\neq\xb, \zb\to\xb} \frac{h(\zb) - h(\xb) - \ub^\top(\zb-\xb)}{\|\zb-\xb\|} \geq 0,
\end{align}
while the (limiting) subdifferential $\partial h$ at $\xb\in\dom h$ is the ``closure'' of $\hat\partial h$:
\begin{align}
\{ \ub: \exists \xb^k \to \xb, h(\xb^k) \to h(\xb), \ub^k \in \hat{\partial} h(\xb^k), \ub^k \to \ub \}.
\end{align}
The critical points of $h$ are $\crit h := \{ \xb: \zero\in\partial h(\xb) \}$. 
\end{definition}

When $h$ is continuously differentiable or convex, the subdifferential $\partial h$ and the set of critical points $\crit h$ coincide with the usual notions. For a closed function $h$, its subdifferential is either nonempty at any point in its domain or the subgradient diverges to some ``direction'' \cite[Corollary 8.10]{vari_ana}.
\begin{definition}[Distance and projection]\label{def: distance}
The distance function \wrt a closed set $\Omega \subseteq \RR^d$ is defined as:
\begin{align}
\dist_{\Omega}(\xb) := \min_{\yb\in\Omega} \|\yb-\xb\|,  
\end{align}
while the metric projection onto $\Omega$ is defined  as:
\begin{align}
\proj{\Omega}(\xb) := \argmin_{\yb\in\Omega} \|\yb-\xb\|,
\end{align}
where $\|\cdot\|$ is the usual Euclidean norm.
\end{definition}

Note that $\proj{\Omega}(\xb)$ is single-valued for all $\xb\in\RR^d$ if and only if $\Omega$ is convex.

\begin{definition}[Proximal map, \eg \cite{vari_ana}]\label{def:prox_map}
The proximal map of a closed and proper function $h$ is (with parameter $\eta > 0$):
\begin{align}
\prox{h}(\xb) := \argmin_{\zb\in \RR^d} h(\zb) + \tfrac{1}{2\eta}\|\zb - \xb\|^2.
\end{align}
Occasionally, we will write $\prox[]{h}$ instead of $\prox[1]{h}$.
\end{definition}

Clearly, for the indicator function
$
h(\xb) = \iota_\Omega(\xb)$, which takes the value 0 for $\xb\in \Omega$ and $\infty$ otherwise, 
its proximal map (with any $\eta > 0$) reduces to the metric projection $\proj{\Omega}$. 
If $h$ decreases slower than a quadratic function (in particular, when $h$ is bounded below), then its proximal map is well-defined for all (small) $\eta$ \cite{vari_ana}. If $h$ is convex, then its proximal map is always a singleton while for nonconvex $h$, the proximal map can be set-valued. In the latter case we will also abuse the notation $\prox{h}(\xb)$ for an arbitrary element from that set. For convex functions, the proximal map is nonexpansive:
\begin{align}
\label{eq:ne}
\forall \xb, \forall \yb, ~ \|\prox{h}(\xb) - \prox{h}(\yb) \| \leq \|\xb - \yb\|,
\end{align}
while for nonconvex functions this may not hold everywhere.

The proximal map is the key component of the proximal gradient algorithm \cite{FukushimaMine81} (\aka forward-backward splitting):
\begin{align}
\label{eq:pg}
\forall\,t = 0, 1, \ldots, \quad \xb(t+1) = \prox[\eta]{g}\big(\xb(t) - \eta \nabla f(\xb(t))\big),
\end{align}
where $\nabla f$ is the (sub)gradient of $f$, and $\eta$ is a suitable step size (that may  change with $t$). It is known that when $f$ is convex with $L$-Lipschitz continuous gradient  and $0 < \eta < 2/L$, then $F_t := f(\xb(t))+g(\xb(t))$ converges to the minimum at the rate $O(1/t)$ and $\xb(t)$ converges to some minimizer $\xb^*$. Accelerated versions \cite{FISTA,Nesterov13} where $F_t$ converges at the faster rate $O(1/t^2)$ are also well-known. Recently, \cite{prox_alter_linear} proved that $\xb(t)$ converges to a critical point even for nonconvex $f$ and nonconvex and nonsmooth $g$ as long as together they satisfy a certain \KL inequality.


\section{Formulation of $\mspg$}\label{sec: models}
Recall the composite minimization problem:
\begin{align}
\label{compmin}
\min_{\xb\in\RR^d} F(\xb), \quad \mbox{ where } \quad F(\xb) = f(\xb) + g(\xb). \tag{P}
\end{align}
We are interested in the case where $d$ is so large that implementing the proximal gradient algorithm \eqref{eq:pg} on a single machine is no longer feasible, hence distributed computation is necessary.

We consider a \textbf{model} parallel system with $p$ machines in total, and decompose the $d$ model parameters into $p$ disjoint groups.
Formally, consider the decomposition $ \RR^d = \RR^{d_1} \times \RR^{d_2} \times\cdots\times\RR^{d_p}$, and denote $x_i$ and $\nabla_i f(\xb): \RR^d \to \RR^{d_i}$ as the $i$-th component of $\xb$ and $\nabla f(\xb)$, respectively. Clearly, $\xb = (x_1, x_2, \ldots, x_p)$ and $\nabla f = (\nabla_1 f, \nabla_2 f, \cdots, \nabla_p f)$. The $i$-th machine is responsible for updating the component $x_i\in\RR^{d_i}$, and for the purpose of evaluating the partial gradient $\nabla_i f(\xb)$ we assume the $i$-th machine also has access to a local, full model parameter $\xb^i\in\RR^d$. The last assumption is made only to simplify our presentation; it can be removed for many machine learning problems, see for instance \cite{random_block_coord,mspg}. 

We make the following standard assumptions regarding problem \eqref{compmin}:
\begin{assum}[Bounded Below]
	\label{assum:bb}
The function $F \!=\! f+g$ is bounded below.
\end{assum}
\begin{assum}[Smooth]
	\label{assum:smooth}
	The gradient $\nabla f$ of $f$ is $L$-Lipschitz continuous:
	\begin{align}
	\forall \xb, \forall \yb, ~ \| \nabla f(\xb) - \nabla f(\yb) \| \leq L \|\xb - \yb\|.
	\end{align}
\end{assum}
\begin{assum}[Separable]
	\label{assum:separable}
	The function $g$ is closed and separable, \ie, $g(\xb) = \sum_{i=1}^{p} g_i(x_i)$.
\end{assum}

\Cref{assum:bb} simply allows us to have a finite minimum value and is usually satisfied in practice.
The smoothness assumption is critical in two aspects: (1) It allows us to upper bound $f$ by its quadratic expansion at the current iterate---a standard step in the convergence proof of gradient type algorithms:
\begin{align}
\label{eq:qb}
\forall \xb, \forall \yb, ~f(\xb) \leq f(\yb) + \langle \xb - \yb, \nabla f(\yb) \rangle + \tfrac{L}{2} \|\xb - \yb\|^2.
\end{align}
 (2) It allows us to bound the inconsistencies in different machines due to asynchronous updates, see \Cref{thm:incon} below. 
The separable assumption is what makes model parallelism interesting and feasible. We remark that both \Cref{assum:smooth} and \Cref{assum:separable} can be relaxed using techniques in \cite{smoothing} and \cite{YuZMX15}, respectively. For brevity we do not pursue these extensions here.
Note that we do \emph{not} assume convexity on either $f$ or $g$, and $g$ need not even be continuous.

We now specify the \mspg algorithm for solving \eqref{compmin} under model parallelism and the \pap protocol. The separable assumption on $g$ implies that 
\begin{align}
\prox{g}(\xb) = \big( \prox{g_1}(x_1),~ \ldots~,~ \prox{g_p}(x_p) \big).
\end{align}
Then, the update on machine $i$ is defined as: 
\begin{align}
\label{update_op}
x_i \gets \prox{g_i}(x_i - \eta \nabla_i f(\xb^i)).
\end{align}
That is, machine $i$ computes a partial gradient mapping \cite{Nesterov13} \wrt the $i$-th component using the local component $x_i$ and the local full model $\xb^i$. To define the latter, consider a global clock shared by all machines and denote $T_i$ as the set of active clocks when machine $i$ performs an update. Note that the global clock is introduced solely for the purpose of our analysis, and the machines need not maintain it in a practical implementation.
Formally, the $t$-th iteration on machine $i$ can be written as:
\begin{equation}\label{mspg_formulation}
\tag{\mspg}
\left\{
\begin{aligned}
	\forall i,~
x_i(t+1) &= \begin{cases}
x_i(t), & t \not\in T_i \\
\prox{g_i}(x_i(t) - \eta \nabla_i f(\xb^i(t))), & t \in T_i
\end{cases}
,
\\ 
\mbox{(local) } \quad \xb^i(t) &= \big( x_1(\tau_1^i(t)),~ \ldots~,~ x_p(\tau_p^i(t)) \big),
\\
\mbox{(global)}\quad  \xb(t) &= \big( x_1(t),~ \ldots~,~ x_p(t) \big).
\end{aligned}
\right.
\end{equation}
That is, machine $i$ only performs its update operator at its active clocks. The local full model $\xb^i(t)$ assembles all components from other machines, and is possibly a delayed version of the global model $\xb(t)$, which assembles the most up-to-date component in each machine. Note that the global model is introduced for our analysis, and is not accessible in a real implementation.
More specifically, $\tau_j^i(t) \leq t$ models the communication delay among machines: when machine $i$ conducts its $t$-th update it only has access to  $x_j(\tau_j^i(t))$, a delayed version of the component $x_j(t)$ on the $j$-th machine. We refer to the above algorithm as \mspg (for \textbf{m}odel parallel, \textbf{P}artially \textbf{A}synchronous, \textbf{P}roximal \textbf{G}radient).

In a practical distributed system, communication among machines is much slower than local computations, and the performance of a \emph{synchronous} system is often bottlenecked at the \emph{slowest} machine, due to the need of synchronization in every step.  
The delays $\tau^i_j(t)$ and active clocks $T_i$ that we introduced in \mspg aim to address such issues. For our convergence proofs, we need the following assumptions:
\begin{assum}[Bounded Delay]\label{assum:delay}
$\exists s \in \mathds{N}, ~\forall i, \forall j, \forall t, ~0 \leq t-\tau_j^i(t) \leq s$, $\tau_i^i(t) \equiv t$.
\end{assum}
\begin{assum}[Frequent Update]\label{assum:skip}
$\exists s \in \mathds{N}, ~\forall i, \forall t, T_i \cap \{t, t+1, \cdots, t+s\} \ne \emptyset$.
\end{assum}

Intuitively, \Cref{assum:delay} guarantees the information that machine $i$ gathered from other machines at the $t$-th iteration are not too obsolete (bounded by at most $s$ clocks apart). The assumption $\tau_i^i(t) \equiv t$ is natural since the $i$-th worker machine is maintaining $x_i$ hence would always have the latest copy.
\Cref{assum:skip}
requires each machine to update at least once in every $s+1$ iterations, for otherwise some component $x_i$ may not be updated at all. We remark that \Cref{assum:delay} and \Cref{assum:skip} are very natural and have been widely adopted in previous works \cite{Baudet78,BT_parallel,ChazanMiranker69,delay_pga_linear,Tseng_linear_PAA}. 
Clearly, when $s = 0$ (\ie, no delay), \mspg reduces to the fully synchronous, model parallel proximal gradient algorithm. 

Before closing this section, we provide a technical tool to control the inconsistency between the local models $\xb^i(t)$ and the global model $\xb(t)$. 
Recall that $(t)_+ = \max\{t, 0\}$ is the positive part of $t$.
\begin{lemma}
\label{thm:incon}
Let \Cref{assum:delay} hold, then the global model $\xb(t)$ and the local models $\{\xb^i(t)\}_{i=1}^p$ satisfy:
\begin{align}
\forall i=1,\cdots,p, ~~ \|\xb(t) - \xb^i(t)\| \leq \sum_{\dind = (t-s)_+}^{t-1} \| \xb(\dind+1) -  \xb(\dind) \| \label{incon_bound}
\\
\label{eq:local_diff}
\|\xb^i(t+1) - \xb^i(t)\| \leq \sum_{\dind = (t-s)_+}^{t} \| \xb(\dind+1) -  \xb(\dind) \|. 
\end{align}
\end{lemma}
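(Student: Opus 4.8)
The plan is to reduce both inequalities to two elementary ingredients: a telescoping identity along the global sequence $\{\xb(\dind)\}$, and a Minkowski-type step that pulls the block-structured Euclidean norm out of a sum over clock indices. First I would write $\xb(t)-\xb^i(t)$ block by block. By the definition of the local model its $j$-th block is $x_j(t)-x_j(\tau_j^i(t))$, which vanishes for $j=i$ because $\tau_i^i(t)\equiv t$, and for $j\neq i$ satisfies $(t-s)_+\le \tau_j^i(t)\le t$ by \Cref{assum:delay}. Telescoping gives $x_j(t)-x_j(\tau_j^i(t))=\sum_{\dind=\tau_j^i(t)}^{t-1}\big(x_j(\dind+1)-x_j(\dind)\big)$, so by the triangle inequality and nonnegativity of the summands (which lets me enlarge the range using $\tau_j^i(t)\ge (t-s)_+$), $\|x_j(t)-x_j(\tau_j^i(t))\|\le \sum_{\dind=(t-s)_+}^{t-1}\|x_j(\dind+1)-x_j(\dind)\|=:c_j$.

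Next I would assemble the blocks. Since $0\le \|x_j(t)-x_j(\tau_j^i(t))\|\le c_j$ for every $j$, monotonicity of the Euclidean norm under componentwise nonnegative domination gives $\|\xb(t)-\xb^i(t)\|\le \|(c_1,\dots,c_p)\|$. Introducing the vectors $\vb_\dind:=\big(\|x_1(\dind+1)-x_1(\dind)\|,\dots,\|x_p(\dind+1)-x_p(\dind)\|\big)\in\RR^p$, we have $(c_1,\dots,c_p)=\sum_{\dind=(t-s)_+}^{t-1}\vb_\dind$ and, by the block structure, $\|\vb_\dind\|=\|\xb(\dind+1)-\xb(\dind)\|$. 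The ordinary triangle inequality $\big\|\sum_\dind\vb_\dind\big\|\le\sum_\dind\|\vb_\dind\|$ then yields \eqref{incon_bound}.

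For \eqref{eq:local_diff} I would run the identical argument on $\xb^i(t+1)-\xb^i(t)$, whose $j$-th block is $x_j(\tau_j^i(t+1))-x_j(\tau_j^i(t))$. By \Cref{assum:delay}, and using $(t+1-s)_+\ge (t-s)_+$, both indices $\tau_j^i(t+1)$ and $\tau_j^i(t)$ lie in $[(t-s)_+,\,t+1]$, so telescoping between them and enlarging the range bounds each block norm by $\sum_{\dind=(t-s)_+}^{t}\|x_j(\dind+1)-x_j(\dind)\|$; the case $j=i$ degenerates to $\|x_i(t+1)-x_i(t)\|$, which is already one of these terms. The Minkowski step is then verbatim, with summation range $[(t-s)_+,t]$ instead of $[(t-s)_+,t-1]$, giving \eqref{eq:local_diff}.

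Everything here is routine; the only point deserving care is the block/Minkowski step — making precise the identification $\|\vb_\dind\|=\|\xb(\dind+1)-\xb(\dind)\|$ and the monotonicity inequality $\|(c_1,\dots,c_p)\|\le\big\|\sum_\dind\vb_\dind\big\|$ — together with keeping the summation ranges consistent after each enlargement via $\tau_j^i(t)\ge(t-s)_+$. I do not anticipate any genuine obstacle beyond this bookkeeping.
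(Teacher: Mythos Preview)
Your argument is correct and follows the same blockwise telescoping as the paper. The only difference is in how the block bounds are combined into the final estimate. The paper works with squares: it expands $\sum_j\big(\sum_{\dind} a_{j\dind}\big)^2$ into the double sum $\sum_{\dind,\dind'}\sum_j a_{j\dind}a_{j\dind'}$, applies Cauchy--Schwarz to each inner sum $\sum_j a_{j\dind}a_{j\dind'}\le \|\xb(\dind+1)-\xb(\dind)\|\,\|\xb(\dind'+1)-\xb(\dind')\|$, and then recognizes the result as $\big(\sum_{\dind}\|\xb(\dind+1)-\xb(\dind)\|\big)^2$. You instead package the block norms into the vectors $\vb_\dind\in\RR^p$ and invoke the triangle inequality $\big\|\sum_\dind\vb_\dind\big\|\le\sum_\dind\|\vb_\dind\|$ directly. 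Your route is a little cleaner and avoids the expand--and--collapse detour, though the two are of course equivalent (Minkowski in $\ell_2$ is Cauchy--Schwarz under the hood). One minor slip in your closing paragraph: you write ``the monotonicity inequality $\|(c_1,\dots,c_p)\|\le\big\|\sum_\dind\vb_\dind\big\|$'', but this is in fact an equality since $(c_1,\dots,c_p)=\sum_\dind\vb_\dind$; the inequality you actually use at that step is the triangle inequality on $\sum_\dind\vb_\dind$, which you already stated correctly earlier.
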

\begin{proof}
 	Indeed, by the definitions in \eqref{mspg_formulation}:
 	\begin{align*}
 	\|\xb(t) - \xb^i(t)\|^2 &= \sum_{j=1}^p \|x_j(t) -  x_j(\tau_j^i(t)) \|^2
 	\\
 	 &\leq 
 	\sum_{j=1}^p \left(\sum_{\dind = \tau_j^i(t)}^{t-1} \|x_j(\dind+1) -  x_j(\dind) \|\right)^2
 	\\
 	&{\leq} 
 	\sum_{j=1}^p \left(\sum_{\dind = (t-s)_+}^{t-1} \|x_j(\dind+1) -  x_j(\dind) \|\right)^2
 	\\
 	&=\sum_{j=1}^p \sum_{\dind = (t-s)_+}^{t-1} \sum_{\dind' = (t-s)_+}^{t-1} \|x_j(\dind+1) -  x_j(\dind) \|\|x_j(\dind'+1) -  x_j(\dind') \|
 	\\
&=\sum_{\dind = (t-s)_+}^{t-1} \sum_{\dind' = (t-s)_+}^{t-1} \sum_{j=1}^p \|x_j(\dind+1) -  x_j(\dind) \|\|x_j(\dind'+1) -  x_j(\dind') \| 	
 	\\
&{\leq}\sum_{\dind = (t-s)_+}^{t-1} \sum_{\dind' = (t-s)_+}^{t-1} \|\xb(\dind+1) -  \xb(\dind) \|\|\xb(\dind'+1) -  \xb(\dind') \| 	
 	\\
 	&=
 	\left(\sum_{\dind = (t-s)_+}^{t-1} \| \xb(\dind+1) -  \xb(\dind) \|\right)^2,
 	\end{align*}
 	where the first inequality is due to the triangle inequality; the second inequality is due to \Cref{assum:delay}; and the last inequality follows from the Cauchy-Schwarz inequality.
	
	Similarly,
	\begin{align*}
	\|\xb^{i}(t) - \xb^{i}(t+1)\|^2 &= \sum_{j=1}^p \|x_j(\tau_{j}^i(t)) - x_j(\tau_{j}^i(t+1))\|^2 
	\\ 
	&\leq 
	 \sum_{j=1}^p \left(\sum_{\dind = \tau_j^i(t)}^{\tau_j^i(t+1)-1} \|x_j(\dind+1) -  x_j(\dind) \|\right)^2
	 \\
	 &\leq
	 \sum_{j=1}^p \left(\sum_{\dind = (t-s)_+}^{t} \|x_j(\dind+1) -  x_j(\dind) \|\right)^2,
	\end{align*}
	and the rest of the proof is completely similar to the previous case.
 \end{proof}

\section{Characterizing the limit points}\label{sec: limit point}
In this section, we characterize the convergence property of the sequences generated by \mspg under very general conditions. 
Recall from \Cref{assum:smooth} that $\nabla f$ is $L$-Lipschitz continuous. 
Our first result is as follows:

\begin{theorem}\label{thm:limitcon}
Let \Cref{assum:bb,assum:smooth,assum:separable,assum:delay,assum:skip} hold. If the step  size $\eta \in \left(0, \frac{1}{L(1+2\sqrt{p}s)}\right)$, then the sequence generated by \mspg is square summable, \ie
\begin{align}\label{sq_summable} 
\sum_{t=0}^{\infty} \|\xb(t+1) - \xb(t)\|^2 < \infty.
\end{align}
In particular, $\lim\limits_{t\to\infty} \|\xb(t+1) - \xb(t)\| = 0$ and  $\lim\limits_{t\to\infty} \|\xb(t) - \xb^i(t)\| = 0$.
\end{theorem}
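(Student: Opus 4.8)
The plan is to establish a sufficient decrease inequality for the function values $F(\xb(t))$ along the trajectory, where the decrease is measured against the step sizes $\|\xb(\dind+1)-\xb(\dind)\|^2$ over a window of recent iterations. First I would fix an iteration $t$ and consider only the machines $i$ with $t \in T_i$, which are the ones that actually move. For such a machine, the update $x_i(t+1) = \prox{g_i}(x_i(t) - \eta \nabla_i f(\xb^i(t)))$ means that $x_i(t+1)$ minimizes $g_i(\cdot) + \tfrac{1}{2\eta}\|\cdot - (x_i(t) - \eta\nabla_i f(\xb^i(t)))\|^2$; comparing the value at $x_i(t+1)$ with the value at $x_i(t)$ gives the standard proximal inequality
\begin{align*}
g_i(x_i(t+1)) + \langle x_i(t+1) - x_i(t), \nabla_i f(\xb^i(t))\rangle + \tfrac{1}{2\eta}\|x_i(t+1)-x_i(t)\|^2 \le g_i(x_i(t)).
\end{align*}
Summing this over all active $i$ (the inactive ones contribute nothing since $x_i$ does not change and $g$ is separable by \Cref{assum:separable}), I get a bound on $g(\xb(t+1)) - g(\xb(t))$ in terms of $\langle \xb(t+1)-\xb(t), \nabla_? f\rangle$ and $\tfrac{1}{2\eta}\|\xb(t+1)-\xb(t)\|^2$ — but the gradient appearing is evaluated at the \emph{local} models $\xb^i(t)$, not at the global $\xb(t)$.

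The second step is to use the descent lemma \eqref{eq:qb} applied to $f$ between $\xb(t)$ and $\xb(t+1)$: $f(\xb(t+1)) \le f(\xb(t)) + \langle \xb(t+1)-\xb(t), \nabla f(\xb(t))\rangle + \tfrac{L}{2}\|\xb(t+1)-\xb(t)\|^2$. Adding this to the proximal inequality and collecting terms, the inner products partially cancel, leaving a cross term of the form $\sum_i \langle x_i(t+1)-x_i(t), \nabla_i f(\xb(t)) - \nabla_i f(\xb^i(t))\rangle$, which I would bound using Cauchy–Schwarz, the $L$-Lipschitz continuity of $\nabla f$ (\Cref{assum:smooth}), and then \Cref{thm:incon} to replace $\|\xb(t)-\xb^i(t)\|$ by $\sum_{\dind=(t-s)_+}^{t-1}\|\xb(\dind+1)-\xb(\dind)\|$. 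Using Young's inequality $ab \le \tfrac{a^2}{2}+\tfrac{b^2}{2}$ and then the Cauchy–Schwarz bound $\big(\sum_{\dind=(t-s)_+}^{t-1} c_\dind\big)^2 \le s\sum_{\dind=(t-s)_+}^{t-1} c_\dind^2$ (there are at most $s$ terms), this cross term is controlled by a multiple of $\|\xb(t+1)-\xb(t)\|^2$ plus a multiple of $\sum_{\dind=(t-s)_+}^{t-1}\|\xb(\dind+1)-\xb(\dind)\|^2$. The outcome is an inequality
\begin{align*}
F(\xb(t+1)) \le F(\xb(t)) - \Big(\tfrac{1}{2\eta} - \tfrac{L}{2} - \tfrac{L\sqrt p s}{2}\cdot c\Big)\|\xb(t+1)-\xb(t)\|^2 + \tfrac{L\sqrt p}{2}\sum_{\dind=(t-s)_+}^{t-1}\|\xb(\dind+1)-\xb(\dind)\|^2
\end{align*}
for appropriate constants, where the factor $\sqrt p$ enters because the gradient-inconsistency cross term splits across the $p$ coordinate blocks. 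The main obstacle is exactly this bookkeeping: tracking the $\sqrt p$ and $s$ factors carefully so that the final coefficient of $\|\xb(t+1)-\xb(t)\|^2$ is strictly positive precisely under the hypothesis $\eta < \frac{1}{L(1+2\sqrt p s)}$.

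The third step is a telescoping/summation argument. Summing the displayed inequality over $t = 0, \ldots, N$, the "backward-looking" error terms $\sum_t \sum_{\dind=(t-s)_+}^{t-1}\|\xb(\dind+1)-\xb(\dind)\|^2$ can be re-indexed: each fixed index $\dind$ appears in at most $s$ of the outer sums, so the whole double sum is bounded by $s\sum_{\dind=0}^{N}\|\xb(\dind+1)-\xb(\dind)\|^2$. Moving it to the left-hand side and choosing $\eta$ in the stated range so that the net coefficient of $\sum_{t=0}^N \|\xb(t+1)-\xb(t)\|^2$ is a positive constant $\gamma > 0$, I obtain $\gamma \sum_{t=0}^N \|\xb(t+1)-\xb(t)\|^2 \le F(\xb(0)) - F(\xb(N+1)) \le F(\xb(0)) - \inf F < \infty$ by \Cref{assum:bb}. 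Letting $N \to \infty$ gives \eqref{sq_summable}. The two stated consequences are immediate: square-summability forces $\|\xb(t+1)-\xb(t)\| \to 0$, and then \eqref{incon_bound} of \Cref{thm:incon} bounds $\|\xb(t)-\xb^i(t)\|$ by a sum of at most $s$ consecutive step sizes, each of which vanishes, so $\|\xb(t)-\xb^i(t)\| \to 0$ as well.
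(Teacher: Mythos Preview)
Your proposal is correct and follows essentially the same route as the paper: the proximal inequality for the active blocks plus the descent lemma for $f$, the cross term $\sum_i \langle x_i(t+1)-x_i(t), \nabla_i f(\xb(t))-\nabla_i f(\xb^i(t))\rangle$ handled via Cauchy--Schwarz, Lipschitz continuity, \Cref{thm:incon}, and $ab \le (a^2+b^2)/2$, then a telescoping sum with the re-indexing observation that each $\dind$ appears at most $s$ times. The paper's bookkeeping yields exactly the one-step inequality you wrote (with $c=1$), and the final consequences are derived just as you describe.
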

\begin{remark}
Our bound on the step size $\eta$ is natural: If $s = 0$, \ie, there is no asynchronism then we recover the standard step size rule $\eta < 1/L$ (we can increase $\eta$ by another factor of 2, had convexity on $g$ been assumed). As staleness $s$ increases, we need a smaller step size to ``damp'' the system to still ensure convergence. The factor $\sqrt{p}$ is another measurement of the degree of ``dependency'' among worker machines: Indeed, we can reduce $\sqrt{p}$ to $\sqrt{\sum_i L_i^2} / L$, where $L_i$ is the Lipschitz constant of $\nabla_i f$ (\cf \eqref{eq:Lip}). 
\end{remark}
\begin{proof} 
	The last claim follows immediately from \cref{sq_summable} and \cref{incon_bound}, so we only need to prove \eqref{sq_summable}.
	
	Consider machine $i$ and any $t\in T_i$. Combining \cref{update_op} with \cref{mspg_formulation} gives  
	\begin{align}\label{update_exp}
	x_i(t+1) = \prox{g_i}\big(x_i(t) - \eta \nabla_i f(\xb^i(t))\big).
	\end{align}
	Then, from \Cref{def:prox_map} of the proximal map we have for all $z\in\RR^{d_i}$:
	\begin{flalign}\label{usc}
	g_i\big(x_i(t+1)\big) + \frac{1}{2\eta}\|x_i(t+1) &- x_i(t) + \eta\nabla_{i} f\big(\xb^i(t)\big)\|^2 \\
	&\le 
	g_i\big(z\big) + \frac{1}{2\eta}\left\|z - x_i(t) + \eta \nabla_{i} f\big(\xb^i(t)\big)\right\|^2. \nonumber
	\end{flalign}
	Set $z = x_i(t)$ and simplify, we obtain:
	\begin{flalign}
	\label{prog_g}
	g_i\big(x_i(t+1)\big) - &g_i\big(x_i(t)\big)  \\
	&\le 
	- \frac{1}{2\eta}\|x_i(t+1) - x_i(t)\|^2 -\left\langle \nabla_{i} f\big(\xb^i(t)\big),  x_i(t+1) - x_i(t) \right\rangle. \nonumber
	\end{flalign}
	Note that if $t \notin T_i$, then $x_i(t+1) = x_i(t)$ and \cref{prog_g} still holds. 
	On the other hand, \Cref{assum:smooth} implies that for all $t$ (\cf \eqref{eq:qb}):
	\begin{flalign}\label{prog_f}
	f\big(\xb(t+1)\big) - f\big(\xb(t)\big) 
	\le 
	\langle \xb(t+1)-\xb(t), \nabla f\big(\xb(t)\big) \rangle + \frac{L}{2} \|\xb(t+1)-\xb(t)\|^2.
	\end{flalign}
	Adding up \cref{prog_f} and \cref{prog_g} (for all $i$) and recall $F = f+ \sum_i g_i$, we have
	\begin{flalign}
	F\big(\xb(t+1)\big) - F\big(\xb(t)\big) & - \tfrac{1}{2}(L - 1/\eta) \|\xb(t+1) - \xb(t)\|^2 \nonumber\\
	&\le 
	 \sum_{i=1}^{p} \left\langle x_i(t+1)-x_i(t), \nabla_i f(\xb(t)) -  \nabla_{i} f\big(\xb^{i}(t)\big)\right\rangle
	\nonumber\\
	&\le 
	 \sum_{i=1}^{p} \|x_i(t+1)-x_i(t)\| \cdot \|\nabla_i f(\xb(t)) -  \nabla_{i} f\big(\xb^{i}(t)\big) \|
	\nonumber\\
	&\overset{\mathrm{(i)}}{\le} 
	 \sum_{i=1}^{p} \|x_i(t+1)-x_i(t)\| \cdot L \|\xb(t) - \xb^{i}(t) \|
	\label{eq:Lip}\\
	&\overset{\mathrm{(ii)}}{\le} 
	 L \cdot \sum_{i=1}^{p} \|x_i(t+1)-x_i(t)\| \cdot  \sum_{\dind=(t-s)_+}^{t-1} \|\xb(\dind+1) - \xb(\dind) \|
	\nonumber\\
	&\overset{\mathrm{(iii)}}{\le}  
	\sqrt{p} L \|\xb(t+1)-\xb(t)\| \cdot  \sum_{\dind=(t-s)_+}^{t-1} \|\xb(\dind+1) - \xb(\dind) \| \label{eq: func_moment}
	\\
	&\overset{\mathrm{(iv)}}{\le} 
	 \frac{\sqrt{p} L}{2}\sum_{\dind=(t-s)_+}^{t-1} \Big[\|\xb(\dind+1) - \xb(\dind) \|^2 + \|\xb(t+1)-\xb(t)\|^2 \Big]
	\nonumber\\
	&\le  \label{eq: progress}
	\frac{\sqrt{p} L s}{2} \|\xb(t+1) \!-\! \xb(t)\|^2 + \frac{\sqrt{p} L}{2} \!\sum_{\dind=(t-s)_+}^{t-1} \!\!\!\|\xb(\dind+1) \!-\! \xb(\dind) \|^2,
	\end{flalign}
	where (i) is due to the $L$-Lipschitz continuity of $\nabla f$, (ii) follows from \cref{incon_bound}, (iii) is the Cauchy-Schwarz inequality, and (iv) follows from the elementary inequality $ab\leq \tfrac{a^2+b^2}{2}$.
	Summing the above inequality over $t$ from $0$ to $n-1$ and rearranging we obtain
	\begin{flalign*}
	F\big(\xb(n)\big) - F\big(\xb(0)\big) 
	& \leq
	\frac{1}{2}(L + \sqrt{p} Ls - 1/\eta) \sum_{t=0}^{n-1} \|\xb(t+1) - \xb(t)\|^2 \\
	&\quad+ \frac{L}{2} \sum_{t=0}^{n-1} \sum_{\dind=(t-s)_+}^{t-1} \|\xb(\dind+1) - \xb(\dind) \|^2
	\\
	&\leq
	\frac{1}{2}(L + 2\sqrt{p}Ls - 1/\eta)  \sum_{t=0}^{n-1} \|\xb(t+1) - \xb(t)\|^2.
	\end{flalign*}
	Therefore, if we choose $0 < \eta < \frac{1}{L (1 + 2\sqrt{p}s)}$, then let $n\rightarrow \infty$ we deduce
	\begin{align}
	\sum_{t=0}^{\infty} \|\xb(t+1) - \xb(t)\|^2 
	\leq 
	\frac{2}{1/\eta - L - 2\sqrt{p}Ls} [F\big(\xb(0)\big) - \inf_{\zb} F(\zb)].
	\end{align}
	By \Cref{assum:bb}, $F$ is bounded from below, hence the right-hand side is finite. 
\end{proof}

The first assertion of the above theorem states that the global sequence $\xb(t)$ has square summable successive differences, while the second assertion implies that both the successive difference of the global sequence and the inconsistency between the local sequences and the global sequence diminish as the number of iterations grows. These two conclusions provide a prelimenary stability guarantee for \mspg. 

Next, we prove that the limit points (if exist) of the sequences $\xb(t)$ and $\xb^i(t), i=1,\ldots, p$ coincide, and they are critical points of $F$. Again, no convexity assumption is imposed on either $f$ or $g$.

\begin{theorem}\label{thm:limitcon_1}
	Consider the same setting as in \Cref{thm:limitcon}. Then, the sequences $\{\xb(t)\}$ and $\{\xb^i(t)\}, i = 1,\ldots, p$, generated by \mspg share the same set of limit points, which is a subset of $\crit F$.
\end{theorem}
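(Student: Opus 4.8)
The plan is to split the argument into two independent pieces: first, that the families $\{\xb(t)\}$ and $\{\xb^i(t)\}$ have a common set of limit points, and second, that any such limit point is critical. The first piece is immediate from \Cref{thm:limitcon}: since $\|\xb(t)-\xb^i(t)\|\to 0$ for every $i$, a subsequence $\xb(t_k)\to\xb^*$ forces $\xb^i(t_k)\to\xb^*$, and conversely; hence the sets of limit points coincide, and it remains to verify that an arbitrary limit point $\xb^*=(x_1^*,\dots,x_p^*)$ satisfies $\zero\in\partial F(\xb^*)$. Since $f\in C^1$ and $g(\xb)=\sum_i g_i(x_i)$, the exact sum and separability rules give $\partial F(\xb^*)=\nabla f(\xb^*)+\prod_{i=1}^p\partial g_i(x_i^*)$, so it suffices to prove $-\nabla_i f(\xb^*)\in\partial g_i(x_i^*)$ for each fixed $i$.

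Next I would fix a subsequence with $\xb(t_k)\to\xb^*$ and, for the given $i$, use \Cref{assum:skip} to pick an active clock $t_k'\in T_i$ with $t_k\le t_k'\le t_k+s$. Because $\|\xb(t+1)-\xb(t)\|\to 0$ (\Cref{thm:limitcon}) and the gap $t_k'-t_k$ is at most $s$, a telescoping bound shows $\xb(t_k')\to\xb^*$ and $\xb(t_k'+1)\to\xb^*$; invoking \eqref{incon_bound} at $t_k'$ (again a sum of at most $s$ vanishing terms) also gives $\xb^i(t_k')\to\xb^*$. At the active clock $t_k'$, \eqref{update_exp} holds, and applying the Fr\'echet optimality condition to the proximal subproblem $\min_z g_i(z)+\tfrac{1}{2\eta}\|z-y_k\|^2$ with $y_k=x_i(t_k')-\eta\nabla_i f(\xb^i(t_k'))$ yields
\begin{align*}
\ub_k \;:=\; \tfrac{1}{\eta}\big(x_i(t_k') - x_i(t_k'+1)\big) - \nabla_i f\big(\xb^i(t_k')\big) \;\in\; \hat\partial g_i\big(x_i(t_k'+1)\big).
\end{align*}
Letting $k\to\infty$: $x_i(t_k'+1)\to x_i^*$; the first term of $\ub_k$ vanishes since consecutive differences vanish; and $\nabla_i f(\xb^i(t_k'))\to\nabla_i f(\xb^*)$ by continuity of $\nabla f$ (\Cref{assum:smooth}). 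Hence $\ub_k\to-\nabla_i f(\xb^*)$.

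To convert this into $-\nabla_i f(\xb^*)\in\partial g_i(x_i^*)$ via \Cref{def:sub}, I still need $g_i(x_i(t_k'+1))\to g_i(x_i^*)$. Lower semicontinuity of $g_i$ gives the $\liminf$ inequality; for the $\limsup$ inequality I would feed $z=x_i^*$ into \eqref{usc} at clock $t_k'$ and observe that the two quadratic terms both converge to $\tfrac{\eta}{2}\|\nabla_i f(\xb^*)\|^2$ — the left one because $x_i(t_k'+1)-x_i(t_k')\to\zero$, the right one because $x_i^*-x_i(t_k')\to\zero$ — so they cancel in the limit, leaving $\limsup_k g_i(x_i(t_k'+1))\le g_i(x_i^*)$. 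Thus $g_i(x_i(t_k'+1))\to g_i(x_i^*)$, and \Cref{def:sub} applied to $(x_i(t_k'+1),\ub_k)$ delivers $-\nabla_i f(\xb^*)\in\partial g_i(x_i^*)$. Running this for every $i$ gives $\zero\in\partial F(\xb^*)$, i.e.\ $\xb^*\in\crit F$.

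The main obstacle is precisely this last function-value convergence: because $g_i$ is only closed (possibly discontinuous, possibly nonconvex), the graph-closedness of the limiting subdifferential cannot be invoked without it, and the cancellation of the two quadratic terms in \eqref{usc} with test point $z=x_i^*$ is the essential device that supplies it. Everything else — the subsequence bookkeeping and the passage to a nearby active clock — is routine once $\|\xb(t+1)-\xb(t)\|\to 0$ and $\|\xb(t)-\xb^i(t)\|\to 0$ are in hand from \Cref{thm:limitcon}.
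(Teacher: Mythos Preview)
Your proof is correct and follows essentially the same route as the paper: both extract a subgradient from the optimality condition of the proximal step, establish function-value convergence of $g_i$ via \eqref{usc} with test point $x_i^*$, and close with the graph definition of the limiting subdifferential. The paper organizes the argument slightly differently---it builds a global subgradient $\ub(t{+}1)\in\partial g(\xb(t{+}1))$ and a uniform bound \eqref{subdiff_bound} valid for \emph{all} $t$ (which it reuses in \Cref{thm:finite})---whereas you work component-wise along a subsequence of active clocks, which is cleaner for this theorem alone.
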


\begin{proof}
	It is clear from \Cref{thm:limitcon} that $\{\xb(t)\}$ and $\{\xb^i(t)\}, i=1, \ldots, p$, share the same set of limit points, and we need to show that any limit point of $\{\xb(t)\}$ is also a critical point of $F$.
	 
	Let $\xb^*$ be a limit point of $\{\xb(t)\}$. By \Cref{def:sub} it suffices to exhibit a sequence $\xb(k)$ satisfying\footnote{Technically, from \Cref{def:sub} we should have the Frech\'et subdifferential $\hat{\partial} F$ in \cref{crit_want}, however, a standard argument allows us to use the more convenient subdifferential \cite[Proposition 8.7]{vari_ana}.} 
	\begin{align}\label{crit_want}
	\xb(k) \to \xb^*, ~ F(\xb(k)) \to F(\xb^*), ~ \zero \leftarrow \ub(k) \in \partial F(\xb(k)).
	\end{align} 
	
	Let us first construct the subgradient sequence $\ub(k)$. Consider machine $i$ and any $\hat t \in T_i$, the optimality condition of \cref{update_exp} gives
	\begin{flalign}
	u_i( \hat t+1) := -\tfrac{1}{\eta} \left[x_i(\hat t+1) - x_i(\hat t) + \eta \nabla_{i} f\big(\xb^i(\hat t)\big)\right] \in \partial g_i(x_i( \hat t+1)).
	\end{flalign}
	It then follows that 
	\begin{flalign}
	\|u_i( \hat t+1) & + \nabla_i f(\xb( \hat t+1))\| \nonumber\\
	&\leq 
	\|u_i( \hat t+1) + \nabla_i f(\xb( \hat t ))\| + \|\nabla_i f(\xb( \hat t+1)) - \nabla_i f(\xb(\hat t))\|
	\nonumber\\
	&\overset{\mathrm{(i)}}{\le} 
	\left\| \tfrac{1}{\eta} \left[ x_i( \hat t+1) - x_i(\hat t) \right] + \nabla_{i} f\big(\xb^{i}(\hat t)\big) - \nabla_{i} f\big(\xb(\hat t)\big) \right\| + L\|\xb(\hat t+1) - \xb(\hat t)\|
	\nonumber\\
	&\overset{\mathrm{(ii)}}{\le} 
	\tfrac{1}{\eta} \| x_i(\hat t+1) - x_i(\hat t) \| + L \|\xb^{i}(\hat t) - \xb(\hat t)\| + L \|\xb(\hat t+1) - \xb(\hat t)\|
	\nonumber\\
	\label{sub_zero_1}
	&\overset{\mathrm{(iii)}}{\le}
	\tfrac{1}{\eta} \| x_i(\hat t+1) - x_i(\hat t) \| + L \sum_{\dind=(\hat t-s)_+}^{\hat t} \|\xb(\dind+1) - \xb(\dind)\|,
	\end{flalign}
	where (i) and (ii) are due to the $L$-Lipschitz continuity of $\nabla f$, and (iii) follows from \cref{incon_bound}.
	Next, consider any other $t\not\in T_i$ and $t\geq s$, we denote $\hat t$ as the \emph{largest} element in the set $\{ \dind \leq t: \dind \in T_i \}$. By \Cref{assum:skip} $\hat t$ always exists and $t - \hat t\leq s$. Since no update is performed on machine $i$ at any clock in $[\hat{t}+1, t]$, we have $x_i(t+1) = x_i(\hat t + 1)$. Thus, we can choose $u_i(t+1) = u_i(\hat t+1) \in \partial g_i (x_i(\hat{t}+1)) = \partial g_i (x_i(t+1))$, and obtain
	\begin{align}
	\| u_i(t+1) + \nabla_i f(\xb(t+1)) - & u_i(\hat t + 1) - \nabla_i f(\xb(\hat t + 1)) \|  \\
	&= 
	\| \nabla_i f(\xb(t+1)) - \nabla_i f(\xb(\hat t + 1)) \|
	\nonumber\\
    &\leq
	\sum_{\dind = \hat t
		+1}^t \|\nabla_i f(\xb(\dind+1)) - \nabla_i f(\xb(\dind))\|
	\nonumber\\
	&\leq 
	\sum_{\dind = (t-s+1)_+}^{ t} \|\nabla_i f(\xb(\dind+1)) - \nabla_i f(\xb(\dind))\|
	\nonumber\\
	\label{sub_zero_2}
	&{\le}
	\sum_{\dind = (t-s+1)_+}^{t} L \|\xb(\dind+1) - \xb(\dind)\|.
	\end{align}
	Combining the two cases in \cref{sub_zero_1} and \cref{sub_zero_2} we have for all $t$:
	\begin{align}
	\label{subdiff_bound}
	\| \ub(t+1) + \nabla f(\xb(t+1))\| 
	\leq 
	(\sqrt{p}/\eta + 2L) {\sum_{\dind = (t-2s)_+}^t} \| \xb(\dind+1) - \xb(\dind) \|,
	\end{align}
	where $\ub(t+1) = \big( u_1(t+1),~ \ldots~,~ u_p(t+1) \big) \in \partial g (\xb(t+1))$, and the factor $\sqrt{p} \geq 1$ is artificially introduced for the convenience of subsequent analysis. Therefore, by \cref{subdiff_bound} and \Cref{thm:limitcon} we deduce
	\begin{align}
	\label{crit_1}
	\lim_{t\to\infty} \dist_{\partial F(\xb(t+1))}(\zero) \leq  \lim_{t\to\infty } 	\| \ub(t+1) + \nabla f(\xb(t+1))\| = 0.
	\end{align}
	
	Recall that $\xb^*$ is a limit point of $\{\xb(t)\}$, thus there exists a  subsequence $\xb(t_m) \to \xb^*$. Next we verify the function value convergence in \cref{crit_want}. 
	The challenge here is that the component function $g$ is only closed, hence may not be continuous. 
	For any $t \in T_i$, applying \cref{usc} with $z = x_i^*$ and rearranging gives 
	\begin{align}
	g_i(x_i(t + 1)) 
	&\le 
	g_i\big(x^*_i\big) + \frac{1}{2\eta}\|x_i^* - x_i(t)\|^2  - \frac{1}{2\eta}\|x_i(t+1) - x_i(t)\|^2  \nonumber\\
	&\quad + \inner{x_i^* - x_i(t+1)}{\nabla_i f(\xb^i(t))}  \nonumber\\
	\label{uniform}
	&= 
	g_i\big(x^*_i\big) + \frac{1}{2\eta}\|x_i^* - x_i(t)\|^2  - \frac{1}{2\eta}\|x_i(t+1) - x_i(t)\|^2  
	\\
	& \quad + \inner{x_i^* - x_i(t+1)}{\nabla_i f(\xb^*)} + \inner{x_i^* - x_i(t+1)}{\nabla_i f(\xb^i(t)) - \nabla_i f(\xb^*)}.
	\nonumber
	\end{align}
	 Observe from \Cref{thm:limitcon} that 
	\begin{align*}
	\lim_{m\to\infty} \max_{t \in [t_m-s, t_m + s]} \|\xb(t+1) - \xb^*\| = 0, \qquad \lim_{m\to\infty} \max_{t \in [t_m-s, t_m + s]} \|\xb^i(t+1) - \xb^*\| = 0.
	\end{align*}
	By \Cref{assum:skip}, $[t_m -s, t_m + s] \cap T_i \ne \emptyset$ for all $i$. Then using the Lipschitz continuity of $\nabla f$ we deduce from \cref{uniform} that
	\begin{align}
	\label{eq:func_con}
	\limsup_{m \to \infty} \max_{t\in[t_m-s, t_m+s] \cap T_i} g_i(x_i(t+1)) \leq g_i(x_i^*).
	\end{align}
	Since each machine updates at least once during $[t_m-s, t_m]$, let $\hat t_m$ be the largest element of $[t_m-s, t_m] \cap T_i$.
	Also notice that each machine $i$ only updates at its active clocks $T_i$, it then follows that
	\begin{align*}
	\max_{t\in[t_m, t_m+s]} g_i(x_i(t+1)) = \max_{t\in[\hat t_m, t_m+s] \cap T_i} g_i(x_i(t+1)) \leq \max_{t\in[t_m -s, t_m+s] \cap T_i} g_i(x_i(t+1)),
	\end{align*}
	and hence by \eqref{eq:func_con}
	\begin{align}\label{eq: usc}
	\limsup_{m \to \infty} \max_{t\in[t_m, t_m+s]} g_i(x_i(t+1)) \leq  g_i(x_i^*).
	\end{align}
	
	To complete the proof, choose any $k_m\in [t_m, t_m+s]$. Since $\xb(t_m) \to \xb^*$, \Cref{thm:limitcon} implies that 
	\begin{align}
	\label{crit_2}
	\xb(k_m) \to \xb^*.
	\end{align}	
	From \cref{eq: usc} we know for all $i$, $\limsup\limits_{m\to\infty} g_i(x_i(k_m)) \leq g_i(x_i^*)$. On the other hand, it follows from the closedness of the function $g_i$ (\cf \Cref{assum:separable}) that $\liminf\limits_{m\to\infty} g_i(x_i(k_m)) \geq g_i(x_i^*)$, thus in fact $ \lim\limits_{m\to\infty} g_i(x_i(k_m)) = g_i(x_i^*) $. Since $f$ is continuous, we know 
	\begin{align}
	\label{crit_3}
	\lim_{m\to \infty} F(\xb(k_m)) = \lim_{m\to \infty} f(\xb(k_m)) + \sum_i g_i(x_i(k_m)) = F(\xb^*).
	\end{align}
	Combining \cref{crit_1}, \cref{crit_2} and \cref{crit_3} we know  from \Cref{def:sub} that $\xb^* \in \crit{F}$.
\end{proof}

\Cref{thm:limitcon_1} further justifies \mspg by showing that any limit point it produces is necessarily a critical point. Of course, for convex functions any critical point is a global minimizer. The closest result to \Cref{thm:limitcon} and \Cref{thm:limitcon_1} we are aware of is \cite[Proposition 7.5.3]{BT_parallel}, where essentially the same conclusion was reached but under the much more restrictive assumption that $g$ is an indicator function of a product \emph{convex} set. Thus, our result is new even when $g$ is a convex function such as the $\ell_1$ norm that is widely used to promote sparsity. Furthermore, we allow $g$ to be any closed separable function (convex or not), covering the many recent nonconvex regularization functions in machine learning and statistics (see \eg \cite{FanLi01,Mazumderetal11,Zhang10,ZhangZhang12}). We also note that the proof of \Cref{thm:limitcon_1} (for nonconvex $g$) involves significantly new ideas beyond those of \cite{BT_parallel}.
 

We note that the existence of limit points can be guaranteed, for instance, if $\{\xb(t)\}$ is bounded or the sublevel set $\{\xb ~|~ F(\xb) \le \alpha \}$ is bounded for all $\alpha \in \RR$. 
However, we have yet to prove that the sequence $\{ \xb(t) \}$ generated by \mspg does converge to one of the critical points, and we fill this gap under two complementary sets of assumptions on the objective function in \Cref{sec: EB,sec: KL}, respectively.


\section{Convergence under Error Bound}\label{sec: EB}
In this section we prove that the global sequence $\{\xb(t)\}$ produced by \mspg converges periodically linearly to a global minimizer, by assuming an error bound condition on the objective function in \eqref{compmin} and a convexity assumption that serves to simplify the presentation:
\begin{assum}[Convex]\label{assum:convex}
	The functions $f$ and $g$ in \eqref{compmin} are convex.
\end{assum}
Note that for convex functions $g$ the proximal mapping $\prox[\eta]{g}$ is single valued for any $\eta > 0$. The error bound condition we need is as follows:
\begin{assum}[Error Bound]\label{assum:eb}
	For every $\alpha > 0$, there exist $\delta ,\kappa>0$ 
	such that for all $\xb$ with $f(\xb) \leq \alpha$ and $\|\xb - \prox[]{g}(\xb - \nabla f(\xb))\| \leq \delta$,
	\begin{align}\label{eq:eb}
	\dist_{\crit F}(\xb) \leq \kappa \|\xb - \prox[]{g}(\xb - \nabla f(\xb))\|,
	\end{align}
	where recall that $\crit F$ is the set of critical points of $F$.
\end{assum}

\Cref{eq:eb} is a proximal extension of the Luo-Tseng error bound \cite{LuoTseng93} where $g$ is the indicator function of a closed convex set. 
A prototypic convex function $F$ satisfying \eqref{eq:eb} is the following: 
\begin{align}
\label{eq:prototype}
F(\xb) = f(A\xb) + g(\xb),
\end{align}
where $f$ is  strongly convex (\ie, $f-\tfrac{\mu}{2}\|\cdot\|^2$ is convex for some $\mu > 0$), $A$ is a linear map, and $g$ is either an indicator function of a convex set \cite{LuoTseng93} or the $\ell_p$ norm for $p\in [1,2]\cup\{\infty\}$ \cite{EB2015}.  Many machine learning formulations such as Lasso and sparse logistic regression fit into this form. In fact, for convex functions $F$ taking such form, the error bound condition in \cref{eq:eb} is recently shown to be equivalent to the following conditions \cite{DrusvyatskiyLewis16,Zhang2016}:
\begin{align*}
\text{Restricted strong convexity}: ~ &\inner{ \xb - \prox[]{g}(\xb)}{\xb - \proj{\crit F}(\xb)} \ge \mu \cdot \dist_{\crit F}^2(\xb), \label{eq: rsc}\\
\text{Quadratic growth}: ~ &F(\xb) - F^* \ge \mu \cdot \dist_{\crit F}^2(\xb), 
\end{align*}
where $F^*$ is the minimum value of $F$ and $\mu>0$ is a constant. 
In general, the error bound condition in \cref{eq:eb} is not exclusive to convex functions. For instance, it holds for $f(\xb)= \tfrac{1}{2}\|\xb\|^2$ and any function $g$ that has a unique global minimizer at $0$ (such as the cardinality function $g(\xb) = \|\xb\|_0$). However, it is often quite challenging to establish the error bound condition for a large family of nonconvex functions.

We define the following nonnegative quantities that measure the progress of \mspg:
\begin{align}
A(t) &:= F(\xb(t)) - F^*, ~ F^* :=\inf_{\xb} F(\xb), \\
B(t) &:= \sum_{k=(t-s-1)_+}^{t-1} \|\xb(k+1) - \xb(k)\|^2, 
\end{align}
In the following key lemma we relate the gap quantities defined above inductively.
\begin{lemma}\label{lemma: EB_2}
	Let \Cref{assum:bb,assum:smooth,assum:separable,assum:delay,assum:skip,assum:convex,assum:eb} hold. Then, we have 
	\begin{align*}
	& A(t+s+1)  \le A(t) - \tfrac{1}{2}(\tfrac{1}{\eta} - L - 2sL \sqrt{p}) B(t+s+1) + \tfrac{1}{2}sL\sqrt{p} B(t) \\
	& 0 \le A(t+s+1) \le a_\eta B(t+s+1) + b B(t),
	\end{align*}
	where $a_\eta$ and $b$ are given in \eqref{eq:ab} below.
\end{lemma}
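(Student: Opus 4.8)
### Proof Proposal for Lemma \ref{lemma: EB_2}

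\textbf{Overall strategy.} The lemma has two parts: a \emph{sufficient-decrease} inequality for the objective gap $A$ over a window of $s+1$ iterations, and an \emph{upper bound} on $A$ in terms of the successive-difference quantities $B$. The first part is essentially an accumulation of the one-step progress estimate already derived in the proof of \Cref{thm:limitcon} (the chain ending at \eqref{eq: progress}); the second part is where the error bound \Cref{assum:eb} (via \Cref{eq:eb}) and convexity \Cref{assum:convex} enter, and this is the genuinely new work.

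\textbf{Part 1 (sufficient decrease).} The plan is to revisit the one-step bound \eqref{eq: progress}, namely
\[
F(\xb(t+1)) - F(\xb(t)) \le -\tfrac{1}{2}(\tfrac{1}{\eta} - L)\|\xb(t+1)-\xb(t)\|^2 + \tfrac{\sqrt{p}Ls}{2}\|\xb(t+1)-\xb(t)\|^2 + \tfrac{\sqrt{p}L}{2}\!\!\sum_{\dind=(t-s)_+}^{t-1}\!\!\|\xb(\dind+1)-\xb(\dind)\|^2,
\]
and sum it over $t$ from the clock corresponding to $A(t)$ up to the clock corresponding to $A(t+s+1)$. Summing $s+1$ consecutive one-step inequalities, each ``backward look'' of length $s$ in the last term reaches back at most $s$ steps before the start of the window; collecting terms, the telescoped left side gives $A(t+s+1) - A(t)$, the diagonal terms contribute $-\tfrac12(\tfrac1\eta - L - sL\sqrt p)$ times the sum of squared differences inside the window (which is, up to reindexing, $B(t+s+1)$ minus a harmless boundary adjustment), and the residual backward-looking mass that spills outside the window is controlled by $B(t)$. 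Careful bookkeeping of the index ranges — matching the definition $B(t)=\sum_{k=(t-s-1)_+}^{t-1}\|\xb(k+1)-\xb(k)\|^2$ — yields the stated coefficients $\tfrac12(\tfrac1\eta - L - 2sL\sqrt p)$ on $B(t+s+1)$ and $\tfrac12 sL\sqrt p$ on $B(t)$. This is routine but index-heavy.

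\textbf{Part 2 (upper bound via error bound).} Here is where convexity and the error bound are used. First, $A(t+s+1)\ge 0$ is immediate from $F^*=\inf F$. For the upper bound, the plan is: (i) use \Cref{assum:skip} to pick, for each block $i$, a recent active clock within the last $s+1$ iterations before $t+s+1$, so that the composite proximal-gradient residual $\|\xb(\tau) - \prox[]{g}(\xb(\tau) - \nabla f(\xb(\tau)))\|$ at some clock $\tau$ in the window is controlled — via \Cref{thm:incon} and the Lipschitz bound on $\nabla f$ — by $\sqrt{B(t+s+1)}+\sqrt{B(t)}$ (the inconsistency between local and global models contributing the $B(t)$ term, the in-window successive differences contributing $B(t+s+1)$); (ii) invoke \Cref{assum:eb} to turn this residual into a bound $\dist_{\crit F}(\xb(\tau)) \le \kappa\,(\text{residual})$, valid once the iterates are close enough to criticality (which holds eventually by \Cref{thm:limitcon}, or can be folded into the constants by restricting $t$ large); (iii) use convexity of $F$ together with the standard descent-lemma / proximal-gradient estimate to bound $F(\xb(\tau)) - F^*$ by a multiple of $\dist_{\crit F}(\xb(\tau))\cdot(\text{residual}) + (\text{residual})^2$, hence by a multiple of $(\text{residual})^2$; (iv) finally relate $F(\xb(\tau))-F^*$ back to $A(t+s+1)$ using Part 1 (or the monotone-up-to-$B(t)$ behavior of $A$) to pay only another $B(t)$-type correction. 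Squaring the residual bound from (i) and using $(a+b)^2\le 2a^2+2b^2$ converts everything into the form $a_\eta B(t+s+1) + b B(t)$, and one reads off $a_\eta$, $b$ — these are the constants to be recorded in the forthcoming \eqref{eq:ab}.

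\textbf{Main obstacle.} The delicate point is step (i)–(iii) of Part 2: the error bound \Cref{assum:eb} is stated for the \emph{synchronous} full-gradient residual $\|\xb - \prox[]{g}(\xb-\nabla f(\xb))\|$ at a single point $\xb$, whereas \mspg only guarantees an approximate stationarity condition at an active clock, computed with \emph{stale, inconsistent} local models $\xb^i$. Bridging this gap — showing the true residual at a well-chosen in-window iterate is dominated by $\sqrt{B(t+s+1)}+\sqrt{B(t)}$, and simultaneously keeping track that the ``$f(\xb)\le\alpha$'' and ``residual $\le\delta$'' qualifications in \Cref{assum:eb} are met (using \Cref{assum:bb} for the former and \Cref{thm:limitcon} for the latter) — is the crux, and also what forces the asymmetric appearance of $B(t)$ alongside $B(t+s+1)$ throughout.
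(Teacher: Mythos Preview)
Your Part 1 is correct and matches the paper exactly: the first inequality is obtained by summing \eqref{eq: progress} over the window $t, t+1, \ldots, t+s$ and regrouping the backward-looking terms into $B(t)$ and $B(t+s+1)$.

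For Part 2, your high-level picture (error bound + convexity + Lipschitz control of stale gradients) is right, but you are missing the key technical device the paper uses, and your description in (i)--(iv) wavers in a way that would cause trouble. You begin (i) by picking, \emph{for each block} $i$, its own last active clock $t_i\in[t,t+s]\cap T_i$, but then immediately switch to talking about the residual at a \emph{single} global clock $\tau$. These are incompatible: at any single $\tau$ some blocks may not be active, so the residual $\|\xb(\tau)-\prox[]{g}(\xb(\tau)-\nabla f(\xb(\tau)))\|$ is not directly tied to what the algorithm computes. The paper resolves this by assembling the per-block active clocks into an auxiliary \emph{mixed-clock} vector $\zb=(x_1(t_1),\ldots,x_p(t_p))$ and its successor $\zb^+=(x_1(t_1+1),\ldots,x_p(t_p+1))$, then observing that maximality of each $t_i$ forces $\zb^+=\xb(t+s+1)$ exactly. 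The error bound is applied at $\zb$ (not at any $\xb(\tau)$), and the objective gap one bounds is $F(\zb^+)-F^*=A(t+s+1)$ directly, so your step (iv) ``relate back to $A(t+s+1)$'' simply disappears.

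Your step (iii) is also too optimistic as written. The ``standard descent-lemma / proximal-gradient estimate'' applies to an \emph{exact} prox-gradient step, which none of the \mspg iterates are. The paper instead uses the optimality condition of each block's prox step to extract a subgradient $\eta^{-1}(z_i-z_i^+)-\nabla_i f(\xb^i(t_i))\in\partial g_i(z_i^+)$, invokes convexity of $g$ for a subgradient inequality at $\zb^+$, and separately uses the quadratic upper bound on $f$ at the projected point $\bar\zb=\proj{\crit F}(\zb)$. Adding these yields $F(\zb^+)-F^*$ controlled by $\|\bar\zb-\zb\|^2$, $\|\zb^+-\zb\|^2$, and $\sum_i\|\xb^i(t_i)-\zb\|^2$. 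The last two are bounded by $B(t)+B(t+s+1)$ via a direct index argument, and the first is handled by the error bound applied at $\zb$ after first showing $\|\zb-\prox{g}(\zb-\eta\nabla f(\zb))\|$ is itself controlled by the same $B$-quantities (using nonexpansiveness of $\prox{g}$ from \Cref{assum:convex} to compare the true residual at $\zb$ with the stale-gradient updates). Your proposal has all the right ingredients but not this assembly; in particular, the introduction of $\zb$ and the identity $\zb^+=\xb(t+s+1)$ are what make the argument close cleanly.
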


\begin{proof}
	The first inequality is obtained by summing the inequality \cref{eq: progress} over $t, t+1, \cdots, t+s$. So we need only prove the second inequality. 

	Let us introduce some notations to simplify the proof.
	For each machine $i$ let $t_i$  be the largest clock in $[t, t+s] \cap T_i$, and denote
	\begin{align}
	\label{eq:z}
	\zb &= \big(x_1(t_1), ~\ldots~, x_p(t_p)\big) \\
	\label{eq:z2} \zb^+ &= \big(x_1(t_1+1), ~\ldots~, x_p(t_p+1)\big) = \big(x_1(t+s+1), ~\ldots~, x_p(t+s+1)\big),
	\end{align}
	where the last equality is due to the maximality of each $t_i$.
	From the optimality condition of the proximal map $z^+_i = \prox{g_i}(z_i - \eta \nabla_i f(\xb^i(t_i)))$ we deduce
	\begin{align}
	\label{eq:subgrad}
	\eta^{-1} (z_i - z_i^+) - \nabla_i f(\xb^i (t_i)) \in \partial g_i (z_i^+).
	\end{align}
	Since the gradient of $f$ is $L$-Lipschitz continuous and the function $g$ is convex, we obtain
	\begin{align*}
	f(\zb^+) - f(\bar\zb) &\le \sum_{i=1}^p \langle z^+_i -  \bar z_i, \nabla_i f(\bar\zb)\rangle + \frac{L}{2} \|\zb^+ - \bar\zb\|^2, \\
	g(\zb^+) - g(\bar\zb) &\le \sum_{i=1}^p \langle z^+_i -  \bar z_i, \eta^{-1} (z_i - z_i^+) - \nabla_i f(\xb^i (t_i)) \rangle,
	\end{align*}
	where we define $\bar \zb := \proj{\crit F}(\zb)$, \ie, the projection of $\zb$ onto the set of critical points of $F$, and the last inequality follows from \cref{eq:subgrad}.
	Adding up the above two inequalities we obtain
	\begin{align*}
	F(&\zb^+) - F^* - \tfrac{L}{2} \|\zb^+ - \bar\zb\|^2 \le \sum_{i=1}^p \langle z^+_i -  \bar z_i, \nabla_i f(\bar\zb) + \eta^{-1} (z_i - z_i^+) - \nabla_i f(\xb^i (t_i))\rangle \\
	&\overset{\mathrm{(i)}}{\le} \sum_{i=1}^{p} [\|z_i^+ - z_i\| + \|z_i - \bar z_i\|] [\|\nabla_i f(\xb^i (t_i)) - \nabla_i f(\bar\zb)\| + \eta^{-1} \|z_i - z_i^+\|] \\	
	&\overset{\mathrm{(ii)}}{\le} \sum_{i=1}^{p} 4\left[ \|z_i^+ - z_i\|^2 + \|z_i - \bar z_i\|^2 + \eta^{-2} \|z_i^+ - z_i\|^2 + \|\nabla_i f(\xb^i (t_i)) - \nabla_i f(\bar\zb)\|^2 \right] \\
	&\le 4\left[ \|\bar\zb - \zb\|^2 + (1+\eta^{-2}) \|\zb^+ - \zb\|^2 +  \sum_{i=1}^{p} L^2 \|\xb^i (t_i) - \bar\zb\|^2 \right],
	\end{align*}
	where (i) is due to the Cauchy-Schwarz inequality and the triangle inequality, (ii) is due to the elementary inequality $(a+b)(c+d) \leq 4(a^2+b^2+c^2+d^2)$, and the last inequality is due to the $L$-Lipschitz continuity of $\nabla f$.
	Using again the triangle inequality we obtain from the above inequality that
	\begin{align}
	F(\zb^+) - F^* &\le (L+4)\|\bar\zb - \zb\|^2 + (L+4+\tfrac{4}{\eta^{2}}) \|\zb^+ - \zb\|^2 + 4L^2\sum_{i=1}^{p} \|\xb^i (t_i) - \zb\|^2 \nonumber\\
	\nonumber&\overset{\mathrm{(i)}}{=} (L\!+\!4)\|\bar\zb \!-\! \zb\|^2 \!+\! \sum_{i=1}^{p} [(L\!+\!4\!+\!\tfrac{4}{\eta^{2}})\|x_i(t_i\!+\!1) \!-\! x_i(t_i)\|^2 \!+\! 4L^2\|\xb^i (t_i) \!-\! \zb\|^2],\\
	&\overset{\mathrm{(ii)}}{\leq} (L\!+\!4)\|\bar\zb \!-\! \zb\|^2 + (L\!+\!4\!+\!\tfrac{4}{\eta^{2}}) B(t+s+1) + 4L^2\sum_{i=1}^{p}  \|\xb^i (t_i) \!-\! \zb\|^2,	
	\label{eq:Fgap}
	\end{align}
	where (i) is due to our definition of $\zb$ and $\zb^+$ in \eqref{eq:z} and \eqref{eq:z2}, and (ii) is due to the fact that $t_i \in [t, t+s]$ for all $i$.
	
	We next bound the terms $\|\bar\zb - \zb\|^2$ and $\|\xb^i (t_i) \!-\! \zb\|^2$. First, note that
	\begin{align*}
	\| x_i(t_i+1) - x_i(t_i)\| &= \|\prox{g_i}(x_i(t_i) - \eta \nabla_i f(\xb^i (t_i))) - x_i(t_i)\| \\
	&\ge \|\prox{g_i}(x_i(t_i) - \eta \nabla_i f(\zb)) - x_i(t_i)\| \\
	&\quad - \|\prox{g_i}(x_i(t_i) - \eta \nabla_i f(\xb^i (t_i))) - \prox{g_i}(x_i(t_i) - \eta \nabla_i f(\zb)) \| \\
	&\overset{\mathrm{(i)}}{\ge} \|\prox{g_i}(x_i(t_i) - \eta \nabla_i f(\zb)) - x_i(t_i)\| - \eta L \|\zb - \xb^i(t_i)\|,
	\end{align*}
	where (i) follows from the non-expansiveness of $\prox{g}$ (recall that $g$ is convex) and the $L$-Lipschitz continuity of $\nabla f$.
	Rearranging the above inequality and summing over all $i$, we obtain
	\begin{align}
	\nonumber \|\prox{g}(\zb- \eta \nabla f(\zb)) - \zb\|^2 &\le \sum_{i=1}^p \left[\| x_i(t_i+1) - x_i(t_i)\| +  \eta L \|\zb - \xb^i(t_i)\| \right]^2 \\
	\label{eq:proxb}&\le 2\sum_{i=1}^p \left[\| x_i(t_i+1) - x_i(t_i)\||^2 +  \eta^2 L^2 \|\zb - \xb^i(t_i)\|^2 \right]. 
	\end{align}
	The last term $\|\zb - \xb^i(t_i)\|^2$ can be further bounded as follows:  
	\begin{align}
	\nonumber
	\|\zb - \xb^i(t_i)\|^2 &= \sum_{j=1}^p \|x_j(t_j) - x_j (\tau_j^i (t_i))\|^2 \\
	&= \sum_{j=1}^p \Big\|\sum_{k = \min\{ t_j, \tau_j^i(t_i) \} }^{\max\{ t_j, \tau_j^i(t_i) \}-1} x_j(k+1) - x_j(k) \Big\|^2 \nonumber\\
	&\le \sum_{j=1}^p \Big[\sum_{k = \min\{ t_j, \tau_j^i(t_i) \} }^{\max\{ t_j, \tau_j^i(t_i) \}-1} \| x_j(k+1) - x_j(k) \| \Big]^2 \nonumber\\	
	&\overset{\mathrm{(i)}}{\le} \sum_{j=1}^p 2s \sum_{k= t-s}^{t+s-1} \| x_j(k+1) - x_j(k) \|^2 \nonumber\\
	\nonumber &= 2s \sum_{k=t-s}^{t+s-1} \|\xb(k+1) - \xb(k)\|^2 \\
	\label{eq:diff} & \leq 2s [B(t) + B(t+s+1)],
	\end{align}
	where (i) is due to the fact that $t_j \in [t, t+s]$ and $\tau_j^i(t_i) \in [t-s, t+s]$.
	Combining \eqref{eq:proxb} and \eqref{eq:diff} we obtain
	\begin{align}
	\label{eq:pb} \|\prox{g}(\zb - \eta \nabla f(\zb)) &- \zb\|^2 
	\leq 2 B(t+s+1) +  4ps\eta^2 L^2 [B(t) + B(t+s+1)].
	\end{align}
	Thanks to \Cref{thm:limitcon}, we know for $t$ sufficiently large, $\|\prox{g}(\zb \!-\! \eta \nabla f(\zb)) \!-\! \zb\| \leq \eta\delta$. Since the function $\eta \mapsto \frac{1}{\eta}\|\prox{g}(\zb \!-\! \eta \nabla f(\zb)) \!-\! \zb\|$ is monotonically decreasing \cite{Sra12}, we can apply the error bound condition in \Cref{assum:eb} for $\eta < 1$ and $t$ sufficiently large, and obtain
	\begin{align}
	\label{eq:projdist}
	\|\bar\zb - \zb\|^2 \le \kappa \|\zb - \prox[]{g}(\zb - \nabla f(\zb))\|^2 \le \kappa \eta^{-2} \|\zb - \prox{g}(\zb - \eta\nabla f(\zb))\|^2.
	\end{align}
	
	Finally, combining \eqref{eq:Fgap}, \eqref{eq:diff}, \eqref{eq:pb} and \eqref{eq:projdist} we arrive at:
	\begin{align}
	\nonumber
	F(\xb(t\!+\!s\!+\!1)) - F^* &= F(\zb^+) - F^* \\
	\nonumber&\le (L\!+\!4)\|\bar\zb \!-\! \zb\|^2 + (L\!+\!4\!+\!\tfrac{4}{\eta^{2}}) B(t\!+\!s\!+\!1) + 4L^2\sum_{i=1}^{p}  \|\xb^i (t_i) \!-\! \zb\|^2,\\	
	&\le  a_\eta B(t+s+1) +  b B(t),
	\end{align}
	where the coefficients are
	\begin{align}
	\label{eq:ab}
	a_\eta &= L+4+8psL^2 + 4ps\kappa L^2(L+4) + \tfrac{2}{\eta^2}(2+4\kappa+\kappa L), \\
	b &= 8psL^2+4ps\kappa L^2(L+4).
	\end{align}

\end{proof}

\Cref{lemma: EB_2} improves the analysis of \cite{Tseng_linear_PAA} in three aspects: (1) it is shorter and simpler; (2) it allows any convex function $g$; and (3) the leading coefficient for $B(t)$ is reduced from $O(1/\eta)$ to $O(1)$. 
The two recursive relations in Lemma \ref{lemma: EB_2}, as shown in \cite[Lemma 4.5]{Tseng_linear_PAA}, easily imply the following convergence guarantee:
\begin{theorem}\label{coro: s step linear}
	Let \Cref{assum:bb,assum:smooth,assum:separable,assum:delay,assum:skip,assum:convex,assum:eb} hold. Then, there exists some $\eta_0 >0$ such that if $0<\eta<\eta_0$, then the sequences $\{A(t), B(t)\}$ generated by \mspg satisfy for all $r = 0, 1, 2, \cdots$
	\begin{align}
	A(r (s+1)) \le C_1 (1-\gamma \eta)^{r},  ~B(r (s+1)) \le C_2 (1-\gamma \eta)^{r},
	\end{align}
	where $C_1, C_2,\gamma<1/\eta$ are positive constants.
\end{theorem}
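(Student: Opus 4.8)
The plan is to derive the theorem entirely from the two recursive inequalities in \Cref{lemma: EB_2} by a Lyapunov (potential) function argument, in the spirit of \cite[Lemma 4.5]{Tseng_linear_PAA}. First I would sample the global clock at multiples of $s+1$: write $\tilde{A}_r := A(r(s+1))$ and $\tilde{B}_r := B(r(s+1))$ for $r = 0,1,2,\dots$, and set $t = r(s+1)$ in \Cref{lemma: EB_2} to obtain
\begin{gather*}
\tilde{A}_{r+1} \le \tilde{A}_r - c_1 \tilde{B}_{r+1} + c_2 \tilde{B}_r, \\
0 \le \tilde{A}_{r+1} \le a_\eta \tilde{B}_{r+1} + b \tilde{B}_r,
\end{gather*}
with $c_1 = \tfrac{1}{2}(1/\eta - L - 2sL\sqrt{p})$, $c_2 = \tfrac{1}{2}sL\sqrt{p}$, and $a_\eta, b$ as in \eqref{eq:ab}. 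The error-bound step used in the proof of \Cref{lemma: EB_2} only becomes available once the clock is large; by \Cref{thm:limitcon} both displays hold for all $r \ge r_0$ for some finite $r_0$, and the finitely many indices $r < r_0$ will be absorbed into the constants at the end.

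Next I would introduce the potential $\Psi_r := \tilde{A}_r + c_1 \tilde{B}_r + \omega \tilde{B}_{r-1}$ and prove $\Psi_{r+1} \le (1-\rho)\Psi_r$ for all $r \ge r_0$, with a suitable $\omega > 0$ and $\rho \in (0,\tfrac{1}{2})$. The argument is three moves: (i) bound $\Psi_{r+1} = \tilde{A}_{r+1} + c_1 \tilde{B}_{r+1} + \omega \tilde{B}_r$ using the first inequality; the $c_1\tilde{B}_{r+1}$ carried by $\Psi_{r+1}$ is cancelled exactly by the $-c_1\tilde{B}_{r+1}$ from the descent step, leaving $\Psi_{r+1} \le \tilde{A}_r + (c_2+\omega)\tilde{B}_r$; (ii) write the target inequality $\Psi_{r+1}\le(1-\rho)\Psi_r$ with everything moved to one side and use the second inequality in the form $\tilde{A}_r \le a_\eta\tilde{B}_r + b\tilde{B}_{r-1}$ to bound the leftover $\rho\tilde{A}_r$; (iii) it then suffices that the resulting coefficients of $\tilde{B}_r$ and of $\tilde{B}_{r-1}$ are both $\le 0$, i.e. $\omega \ge \rho b/(1-\rho)$ and $\rho\big(a_\eta + c_1 + b/(1-\rho)\big) \le c_1 - c_2$. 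Picking $\omega = \rho b/(1-\rho)$ and using $\rho < \tfrac{1}{2}$ to replace $b/(1-\rho)$ by $2b$, the remaining requirement reduces to $\rho \le (c_1 - c_2)/(a_\eta + c_1 + 2b)$.

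To close, I would track the $\eta$-dependence: for $\eta$ below a threshold $\eta_0$ depending only on $L,s,p$, one has $c_1 - c_2 = \tfrac{1}{2}(1/\eta - L - 3sL\sqrt{p}) \ge 1/(4\eta)$, whereas $a_\eta + c_1 + 2b \le C'/\eta^2$ for a constant $C'$ (the $\eta^{-2}$ blow-up is entirely from the $\tfrac{2}{\eta^2}(2+4\kappa+\kappa L)$ term of $a_\eta$ in \eqref{eq:ab}; all else is $O(1/\eta)$). Hence $\rho := \gamma\eta$ with $\gamma := 1/(4C')$ satisfies the requirement, provided $\eta$ is also small enough that $\gamma\eta < \tfrac{1}{2}$; in particular $\gamma < 1/\eta$ as claimed. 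Iterating $\Psi_{r+1}\le(1-\gamma\eta)\Psi_r$ yields $\Psi_r \le (1-\gamma\eta)^{\,r-r_0}\Psi_{r_0}$ for $r\ge r_0$, and since $0 \le \tilde{A}_r \le \Psi_r$ and $0 \le c_1\tilde{B}_r \le \Psi_r$, the two bounds follow for $r \ge r_0$; enlarging $C_1, C_2$ to also dominate the finitely many terms $r < r_0$ finishes the proof. I expect the real obstacle to be the asymmetry of the second inequality: its cross term $b\tilde{B}_r$ --- which, relative to the descent step, plays the role of a $b\tilde{B}_{r-1}$ --- prevents a one-step potential $\tilde{A}_r + \lambda\tilde{B}_r$ from closing the recursion, and the memory term $\omega\tilde{B}_{r-1}$ is exactly the fix; the remainder is constant-bookkeeping and confirming a single $\gamma$ works uniformly for all small $\eta$.
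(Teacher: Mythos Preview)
Your proposal is correct and follows precisely the route the paper indicates: the paper's own proof is a one-line invocation of \cite[Lemma~4.5]{Tseng_linear_PAA} applied to the two recursive inequalities of \Cref{lemma: EB_2}, and your potential-function argument with $\Psi_r = \tilde{A}_r + c_1\tilde{B}_r + \omega\tilde{B}_{r-1}$ is exactly the content of that lemma. In other words, you have supplied the details the paper leaves to a citation; the coefficient bookkeeping you give (in particular the reduction to $\rho(a_\eta+c_1+2b)\le c_1-c_2$ and the $\eta$-scaling $\rho=\gamma\eta$ via $a_\eta=O(\eta^{-2})$, $c_1-c_2=O(\eta^{-1})$) is sound.
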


Hence, the gaps $A(t)$ and $B(t)$ that measure the progress of \mspg decrease by a constant factor $(1-\gamma \eta)$ for every $s+1$ steps, which makes intuitive sense since in the worst case each worker machine only performs one update in every $s+1$ steps. In other words, $(s+1)$ is the natural time scale for measuring progress here. Note that since $\|\xb(t+s+1) - \xb(t)\|^2 \leq (s+1) B(t+s+1)$, it follows easily that the global sequence $\xb(t)$ and consequently also the local sequences $\{ \xb^i(t) \}$ all converge to the same limit point in $\crit F$ at a $(s+1)$-periodically linear rate.


\section{Convergence with \KL inequality}\label{sec: KL}
The error bound condition considered in the previous section is not easy to verify in general. It has been discovered recently that the error bound condition is equivalent to other notions in optimization that can be verified in alternative ways \cite{DrusvyatskiyLewis16,Zhang2016}, see \eg \eqref{eq:prototype}. However, for nonconvex functions, sometimes even the simple ones, it remains a challenging task to verify if the error bound condition holds. This failure motivates us to investigate another property, the Kurdyka-${\L}$ojasiewicz (\KL) inequality, that has been shown to be quite effective in dealing with nonconvex functions.

\begin{definition}[\KL property, {\cite[Lemma 6]{prox_alter_linear}}]\label{def:KL}
	Let $\Omega\subset \mathrm{dom} h$ be a compact set on which the function $h$ is a constant. We say that $h$ satisfies the \KL property if there exist $\varepsilon, \lambda >0$ such that for all $\bar{\xb} \in \Omega$ and all $\xb\in \{\zb\in \RR^d : \dist_\Omega(\zb)<\varepsilon\}\cap [\zb: h(\bar{\xb}) < h(\zb) <h(\bar{\xb}) + \lambda]$, it holds that
	\begin{align}\label{KLineq}
	\varphi'(h(\xb) - h(\bar{\xb})) \cdot \dist_{\partial h(\xb)}(\zero) \ge 1,
	\end{align}
	where the function $\varphi: [0,\lambda) \to \RR_+, 0\mapsto 0$, is continuous, concave, and has continuous and positive derivative $\varphi'$ on $(0,\lambda)$.
\end{definition}

The \KL inequality in \cref{KLineq} is an important tool to bound the trajectory length of a dynamical system (see \cite{BolteDLM10,initial_KL} and the references therein for some historic developments). It has recently been used to analyze discrete-time algorithms in \cite{AbsilMA05} and proximal algorithms in \cite{KL_gene_rate,KL_bound_sequence,prox_alter_linear}.
As we shall see, the function $\varphi$ will serve as a Lyapunov potential function.
Quite conveniently, most practical functions, in particular, the quasi-norm $\|\cdot\|_p$ for positive rational $p$, as well as convex functions with certain growth conditions, are \KL. 
For a more detailed discussion of \KL functions, including many familiar examples, see \cite[Section 5]{prox_alter_linear} and \cite[Section 4]{KL_bound_sequence}.

Following the recipe in \cite{prox_alter_linear}, we need the following assumption to guarantee the algorithm is making \emph{sufficient} progress:
\begin{assum}[Sufficient decrease]\label{assum:sd}
	There exists $\alpha \!>\! 0$ such that for all large $t$,
	\begin{align}
	F(\xb(t+1)) \leq F(\xb(t)) - \alpha \|\xb(t+1) - \xb(t)\|^2.
	\end{align}
\end{assum}
The sufficient decrease assumption is automatically satisfied in many descent algorithms, e.g., the proximal gradient algorithm. However, in the partially asynchronous parallel (\pap) setting, it is highly nontrivial to satisfy the sufficient decrease assumption because of the complication due to communication delays and update skips. 
Note also that none of the worker machines actually has access to the global sequence $\xb(t)$, so even verifying the sufficient decrease property is not trivial.
To simplify the presentation, we first analyze the performance of \mspg using the \KL inequality and taking the sufficient decrease property for granted, and later we we will give some verifiable conditions to justify this simplification.  

Our first result in this section strengthens the convergence properties in \Cref{thm:limitcon,thm:limitcon_1} for \mspg:
\begin{theorem}[Finite Length]\label{thm:finite}
	Let \Cref{assum:bb,assum:smooth,assum:separable,assum:delay,assum:skip,assum:sd} hold for \mspg,  and let $F$ satisfy the \KL property in \Cref{def:KL}. Then, with step size $\eta \in \left(0, \frac{1}{L(1+2\sqrt{p}s)}\right)$, every bounded sequence $\{\xb(t)\}$ generated by \mspg satisfies
	\begin{flalign}
	\label{global_finite}
	\sum_{t=0}^{\infty} \|\xb(t+1) - \xb(t)\| < \infty,\\
	\label{local_finite} 
	\forall i = 1, \ldots ,p, ~ 
	\sum_{t=0}^{\infty} \|\xb^i(t+1) - \xb^i(t)\| < \infty.
	\end{flalign}
	Furthermore, $\{\xb(t)\}$ and $\{\xb^i(t)\}_{i=1}^p$ converge to the same critical point of $F$.
\end{theorem}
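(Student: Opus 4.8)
\textbf{Proof proposal for \Cref{thm:finite}.}
The plan is to follow the now-standard ``\KL-descent'' template of \cite{prox_alter_linear}, but adapted carefully to the partially asynchronous setting where the delay and skip bounds force us to work on the natural time scale of $s+1$ steps rather than single steps. The three ingredients one always needs for such an argument are: (a) a \emph{sufficient decrease} property, which here is exactly \Cref{assum:sd} (assumed, not proved, at this stage); (b) a \emph{relative error} (subgradient) bound of the form $\dist_{\partial F(\xb(t+1))}(\zero) \le C \sum_{k=(t-2s)_+}^{t} \|\xb(k+1)-\xb(k)\|$, which is precisely \cref{subdiff_bound} established inside the proof of \Cref{thm:limitcon_1}; and (c) a \emph{continuity} property, $F(\xb(t_m))\to F(\xb^*)$ along a subsequence converging to a limit point $\xb^*$, which is the function-value convergence proved in \Cref{thm:limitcon_1} (and which upgrades to $F(\xb(t))\to F(\xb^*)$ for \emph{all} $t$ once \Cref{assum:sd} is in force, since then $F(\xb(t))$ is monotone nonincreasing and bounded below, hence convergent, so its limit must equal the subsequential limit $F(\xb^*)$).

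First I would observe that, since $\{\xb(t)\}$ is assumed bounded, it has at least one limit point $\xb^*$; by the remark just made, $F(\xb(t))\downarrow F(\xb^*)=:F^*$, and we may assume WLOG (shifting $F$) that $F^*=0$ and that $F(\xb(t))>0$ for all large $t$ (if $F(\xb(t_0))=0$ for some $t_0$, sufficient decrease forces $\xb(t+1)=\xb(t)$ for all $t\ge t_0$ and there is nothing to prove). The limit point set $\Omega$ of $\{\xb(t)\}$ is nonempty, compact (bounded, and closed as an $\omega$-limit set), and $F\equiv F^*=0$ on it by \Cref{thm:limitcon_1}; moreover $\dist_\Omega(\xb(t))\to 0$. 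Hence for all large $t$ the iterate $\xb(t)$ lies in the neighborhood $\{\dist_\Omega<\varepsilon\}\cap[0<F<\lambda]$ on which the \KL inequality \eqref{KLineq} applies, giving
\begin{align}\label{eq:KLapplied}
\varphi'\!\big(F(\xb(t))\big)\cdot \dist_{\partial F(\xb(t))}(\zero)\ \ge\ 1.
\end{align}
Now define the shorthand $\Delta_t:=\|\xb(t+1)-\xb(t)\|$ and $\phi_t:=\varphi(F(\xb(t)))$. Combining the concavity of $\varphi$ (so $\phi_t-\phi_{t+1}\ge \varphi'(F(\xb(t)))(F(\xb(t))-F(\xb(t+1)))$), the sufficient decrease $F(\xb(t))-F(\xb(t+1))\ge \alpha\Delta_t^2$, the subgradient bound \eqref{subdiff_bound} rewritten as $\dist_{\partial F(\xb(t))}(\zero)\le M\sum_{k=(t-1-2s)_+}^{t-1}\Delta_k$ with $M=\sqrt{p}/\eta+2L$, and \eqref{eq:KLapplied}, one gets a telescoping-friendly inequality
\begin{align}\label{eq:master}
\alpha\,\Delta_t^2\ \le\ (\phi_t-\phi_{t+1})\cdot M\!\!\sum_{k=(t-1-2s)_+}^{t-1}\!\!\Delta_k .
\end{align}
Applying $ab\le \tfrac14 (a^2/\rho + \rho b^2)$ to the right side with a suitably large constant $\rho$ (to absorb the $2s+1$ terms in the sum via $(\sum_{k} \Delta_k)^2 \le (2s+1)\sum_k \Delta_k^2$) yields, for some constants $c_1,c_2>0$,
\begin{align}\label{eq:master2}
\Delta_t^2\ \le\ c_1(\phi_t-\phi_{t+1})^2 + c_2\!\!\sum_{k=(t-1-2s)_+}^{t-1}\!\!\Delta_k^2 .
\end{align}

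The delicate step — and the one I expect to be the main obstacle — is turning the ``self-referential'' recursion \eqref{eq:master2}, in which $\Delta_t$ is bounded in terms of \emph{earlier} $\Delta_k$, into genuine summability of $\sum_t \Delta_t$. In the vanilla (synchronous) case the analogue of \eqref{eq:master} has only $\Delta_{t-1}$ on the right and one gets $\Delta_t\le \tfrac12\Delta_{t-1}+\text{const}\cdot(\phi_t-\phi_{t+1})$ after an AM-GM split, which telescopes immediately; here the delay window of width $2s+1$ means the clean approach is to sum \eqref{eq:master} (not its squared weakened form) over a block of $s+1$ consecutive clocks and to carry an ``$(s+1)$-step'' potential. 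Concretely, I would sum \eqref{eq:master} over $t\in\{N,\dots,N+s\}$; on the left the sum of $\Delta_t^2$ over a full block, combined with the Cauchy–Schwarz inequality $\big(\sum_{t=N}^{N+s}\Delta_t\big)^2\le (s+1)\sum_{t=N}^{N+s}\Delta_t^2$, controls the block length $\sum_{t=N}^{N+s}\Delta_t$; on the right the $\phi$-differences telescope to $\phi_N-\phi_{N+s+1}$, and the double sum $\sum_{t=N}^{N+s}\sum_{k=(t-1-2s)_+}^{t-1}\Delta_k$ is bounded by a \emph{finite} number (depending only on $s$) of previous block lengths $\sum_{t=N'}^{N'+s}\Delta_t$ with $N'\in\{N-(s+1),\,N-2(s+1)\}$ (since the window reaches back at most $2s+1\le 2(s+1)$ clocks). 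Writing $S_N:=\sum_{t=N}^{N+s}\Delta_t$ for the $N$-th block length, this produces an inequality of the form
\begin{align}\label{eq:block}
S_N^2\ \le\ c_3(s+1)(\phi_N-\phi_{N+s+1})\big(S_{N-(s+1)}+S_{N-2(s+1)}+S_N\big),
\end{align}
and then the standard AM-GM trick $\sqrt{uv}\le \tfrac{u}{4\theta}+\theta v$, applied so as to move a small multiple of $(S_{N-(s+1)}+S_{N-2(s+1)}+S_N)$ to the left and absorb it, gives $S_N\le \tfrac14(S_{N-(s+1)}+S_{N-2(s+1)})+\text{const}\cdot(\phi_N-\phi_{N+s+1})$ (after possibly also absorbing the $S_N$ term, using $\theta<1$ small). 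Summing this last inequality over $N$ along each of the $s+1$ residue classes modulo $s+1$ makes the $\phi$-terms telescope (they are bounded since $\varphi\ge 0$ and $\varphi$ is continuous at $0$ with $\varphi(0)=0$) and, because the geometric-type factor $\tfrac14\cdot 2=\tfrac12<1$ keeps the recursion contractive, yields $\sum_N S_N<\infty$, which is exactly \eqref{global_finite}. Then \eqref{local_finite} follows at once from \eqref{eq:local_diff} of \Cref{thm:incon} (each $\|\xb^i(t+1)-\xb^i(t)\|$ is bounded by a sum of at most $s+1$ consecutive $\Delta_k$'s, so summing over $t$ only multiplies the finite total $\sum_t\Delta_t$ by $s+1$). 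Finally, \eqref{global_finite} says $\{\xb(t)\}$ is Cauchy, hence convergent to a single point $\xb^*$, which by \Cref{thm:limitcon_1} lies in $\crit F$; \eqref{local_finite} (or \cref{incon_bound} together with $\Delta_t\to 0$) forces every $\{\xb^i(t)\}$ to converge to the same $\xb^*$. I would flag the bookkeeping in \eqref{eq:block}–\eqref{eq:block}—getting the window-overlap constants and the residue-class summation exactly right so that the contraction factor genuinely stays below $1$—as the one place where care is essential; everything else is a transcription of \cite{prox_alter_linear} with $t$ replaced by blocks of length $s+1$.
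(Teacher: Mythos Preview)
Your proposal is correct in its overall architecture: the three ingredients (sufficient decrease, the subgradient bound \eqref{subdiff_bound}, and function-value convergence on the limit set) are exactly what the paper uses, and your verification that $F$ is constant on the compact limit set $\Omega$ so that the uniformized \KL inequality applies matches the paper's argument. The reduction of \eqref{local_finite} to \eqref{global_finite} via \eqref{eq:local_diff}, and the final Cauchy/convergence conclusion, are also identical to the paper.

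Where you diverge is in the ``delicate step''. You propose to aggregate \eqref{eq:master} over blocks of length $s+1$, introduce block lengths $S_N$, derive a recursion of the form $S_N \le \tfrac14(S_{N-(s+1)}+S_{N-2(s+1)}) + \text{const}\cdot(\phi_N-\phi_{N+s+1})$, and sum along residue classes. This can be made to work, but it is considerably more intricate than necessary, and (as you yourself flag) getting the overlap constants to produce a genuine contraction factor below $1$ is fiddly. The paper bypasses blocks entirely: starting from your \eqref{eq:master}, it applies the scalar inequality $2\sqrt{ab}\le a+b$ (with a free scaling parameter $\delta$) \emph{at each single step} to obtain the linear bound
\[
2\Delta_t \ \le\ \tfrac{\delta}{\alpha}(\phi_t-\phi_{t+1}) \;+\; \tfrac{M}{\delta}\!\!\sum_{k=(t-1-2s)_+}^{t-1}\!\!\Delta_k,
\]
then sums over $t=m,\dots,n$. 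In the resulting double sum each $\Delta_k$ appears at most $2s+1$ times, so the right side contains $\tfrac{(2s+1)M}{\delta}\sum_{t=m}^{n}\Delta_t$ plus a finite boundary term $\sum_{k=(m-2s)_+}^{m-1}\Delta_k$. Choosing $\delta=(2s+1)M$ makes that coefficient exactly $1$, which cancels against the $2$ on the left, leaving $\sum_{t=m}^n \Delta_t$ bounded by the telescoped $\varphi$-term plus a fixed finite sum---no contraction argument, no residue classes, no block bookkeeping. Your route works but is heavier; the paper's free-parameter trick is the cleaner way to close the loop.
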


\begin{proof}
	We first show that \cref{global_finite} implies \cref{local_finite}. Indeed, recall from \eqref{eq:local_diff}:
	\begin{align*}
	\|\xb^i(t+1) - \xb^i(t)\| 
	&\leq 
	\sum_{k=(t-s)_+}^{t} \|\xb(k+1) - \xb(k)\|.
	\end{align*}
	Therefore, summing for $t=0, 1, \cdots, n$ gives
	\begin{align*}
	\sum_{t=0}^n \|\xb^i(t+1) - \xb^i(t)\| 
	&\leq 
	\sum_{t=0}^n  \sum_{k=(t-s)_+}^{t} \|\xb(k+1) - \xb(k)\|
	\\
	&\leq 
	(2s+1) \sum_{t=0}^n \|\xb(t+1) - \xb(t)\|.
	\end{align*}
	The claim then follows by letting $n$ tend to infinity. 
	
	By \Cref{thm:limitcon}, the limit points of $\{\xb(t)\}$ and $\{\xb^i(t)\}_{i=1}^p$ coincide and are critical points of $F$. Thus, the only thing left to prove is the finite length property in \cref{global_finite}. 
	By \Cref{assum:sd} and \Cref{assum:bb}, the objective value $F(\xb(t))$   decreases to a finite limit $F^*$. 
	Since $\{\xb(t)\}$ is assumed to be bounded, the set of its limit points $\Omega$ is nonempty and compact. Summing \cref{usc} over all $i$ and set $\zb \in \Omega$, we obtain
	\[
	g(\xb(t+1)) \le g(\zb) - \frac{1}{2\eta} \|\xb(t+1) - \xb(t)\|^2 - \sum_{i=1}^{p}\inner{\nabla_i f(\xb^i(t))}{\xb(t+1) - \xb(t)}.
	\]
	Note that $\xb(t+1) - \xb(t) \to 0$.  Also, since $\{\xb(t)\}$ is bounded and $\xb(t) - \xb^i(t)\to 0$ for all $i$, $\{\xb^i(t)\}_{i=1}^p$ are all bounded.  we then take limsup on both sides and obtain that $\limsup_{t\to\infty} g(\xb(t+1)) \le g(\zb)$. Together with the closedness of $g$ we further obtain that $\lim_{t\to\infty} g(\xb(t+1)) = g(\zb)$. Note that $f$ is continuous, we thus conclude that $\lim_{t\to\infty} F(\xb(t+1)) = F(\zb)$ for all $\zb\in \Omega$. Note that $F(\xb(t)) \downarrow F^*$. Thus for all $\xb^* \in \Omega$, we have $F(\xb^*) \equiv F^*$. 
	Now fix $\varepsilon > 0$. Since $\Omega$ is compact, for $t$ sufficiently large we have $\dist_{\Omega}(\xb(t)) \leq \varepsilon$. We now have all ingredients to apply the  \KL inequality in \Cref{def:KL}: for all sufficiently large $t$,
	\begin{flalign}\label{uKL}
	\varphi'\big(F(\xb(t)) - F^*\big) \cdot \dist_{\partial F(\xb(t))}(\zero) \ge 1.
	\end{flalign}
	Since $\varphi$ is concave, we obtain
	\begin{flalign}
	\Delta_{t,t+1} &:= \varphi\big(F(\xb(t)) - F^*\big) - \varphi\big(F(\xb(t+1)) - F^*\big) \nonumber\\
	&\ge \varphi'\big(F(\xb(t)) - F^*\big) \big(F(\xb(t)) -F(\xb(t+1))\big)
	\nonumber\\
	\label{Lyapunov}
	&\ovset{(i)}{\ge} 
	\frac{\alpha \|\xb(t+1) - \xb(t) \|^2}{\dist_{\partial F(\xb(t))}(\zero)},
	\end{flalign}
	where (i) follows from \Cref{assum:sd} and \cref{uKL}.
	It is clear that the function $\varphi$ (composed with $F$) serves as a Lyapunov function. Using the elementary inequality $2\sqrt{ab} \le a + b$ we obtain from \cref{Lyapunov} that for $t$ sufficiently large,
	\begin{flalign*}
	2\|\xb(t+1) - \xb(t)\| 
	\le 
	\tfrac{\delta}{\alpha} \Delta_{t,t+1} + \tfrac{1}{\delta}\dist_{\partial F(\xb(t))}(\zero),
	\end{flalign*}
	where $\delta > 0$ will be specified later. Recalling the bound for $\partial F(\xb(t))$ in \cref{subdiff_bound}, and summing over $t$ from $m$ (sufficiently large) to $n$ gives:
	\begin{flalign*}
	2\sum_{t=m}^{n}&\|\xb(t+1) - \xb(t)\| 
	\le  
	\sum_{t=m}^{n} \frac{\delta}{\alpha} \Delta_{t, t+1} + 
	\sum_{t=m}^{n} \frac{1}{\delta} \dist_{\partial F(\xb(t))}(\zero)
	\\
	&\ovset{(i)}{\le}
	\frac{\delta}{\alpha} \varphi\big(F(\xb(m)) - F^*\big) + 
	\sum_{t=m}^{n} \frac{\sqrt{p}/\eta + 2L}{\delta} \sum_{\dind = (t-2s)_+}^t \| \xb(\dind+1) - \xb(\dind) \|
	\\
	&\le
	\frac{\delta}{\alpha} \varphi\big(F(\xb(m)) - F^*\big) + 
	\frac{(2s+1)(\sqrt{p}/\eta + 2L)}{\delta}
	\sum_{\dind=(m-2s)_+}^{m-1} \| \xb(\dind+1) - \xb(\dind) \|
	\\
	&\qquad\qquad \qquad\qquad + 
	\frac{(2s+1)(\sqrt{p}/\eta + 2L)}{\delta}
	\sum_{t=m}^{n} \| \xb(t+1) - \xb(t) \|,
	\end{flalign*}
	where (i) is due to \cref{subdiff_bound}.
	Setting $\delta = (2s+1)(\sqrt{p}/\eta + 2L)$ and rearranging gives 
	\begin{flalign*}
	\sum_{t=m}^{n}\|\xb(t+1) - \xb(t)\| 
	&\le 
	\frac{(2s+1)(\sqrt{p}/\eta + 2L)}{\alpha} \varphi\big(F(\xb(m)) - F^*\big) \\
	&\quad+ 
	\sum_{\dind=(m-2s)_+}^{m-1} \| \xb(\dind+1) - \xb(\dind) \|.
	\end{flalign*}
	Since the right-hand side is finite, let $n$ tend to infinity completes the proof for \cref{global_finite}.
\end{proof}

Compared with \eqref{sq_summable}  in \Cref{thm:limitcon}, we now have the successive differences to be absolutely summable (instead of square summable). This is a significantly stronger result as it immediately implies that the whole sequence is Cauchy and hence convergent, whereas we cannot get the same conclusion from the square summable property in \Cref{thm:limitcon}.
We note that local maxima are excluded from being the limit in \Cref{thm:finite}, due to \Cref{assum:sd}. Also, the boundedness assumption on the trajectory $\{\xb(t)\}$ is easy to satisfy, for instance, when $F$ has bounded sublevel sets. We refer to \cite[Remark 3.3]{KL_bound_sequence} for more  conditions that imply the boundedness condition. Moreover, following similar arguments in \cite{KL_bound_sequence} we can also determine the local convergence rates of the sequences generated by \mspg.

In the remaining part of this section we provide some justifications for the sufficient decrease property in \Cref{assum:sd}.  For simplicity we assume all worker machines perform updates in each time step $t$: 
\begin{assum}\label{assum:always}
	$\forall i = 1, \cdots, p, \forall t, t \in T_i.$
\end{assum}
Note that \Cref{assum:always} is commonly adopted in the analysis of many recent parallel systems \cite{distrib_delay,delay_pga_linear,ssp_parameter,parameter_server,asy_coord_prox,hogwild}. Put it differently, under \Cref{assum:always} we measure the performance of the system \wrt the minimum number of updates among all worker machines whereas  under the more relaxed  \Cref{assum:skip} we measure the performance \wrt the total number of updates among all worker machines.

We will replace the sufficient decrease property in \Cref{assum:sd} with the following key property that turns out to be easier to verify:
\begin{assum}[Proximal Lipschitz]\label{assum:EL}
	We say a pair of functions $f$ and $g$ satisfy the proximal Lipschitz property on a sequence $\{\xb(t)\}$ if for all $\eta$ sufficiently small, there exists $L_\eta \in o(1)$, \ie $L_\eta \to 0$ as $\eta \to 0$, such that for all large $t$,
	\begin{align}
	\| \Delta_\eta(\xb(t)) - \Delta_\eta(\xb(t+1)) \| \le L_\eta \|\xb(t) - \xb(t+1)\|,
	\end{align}
	where\footnote{Should the proximal map be multi-valued, we contend with any single-valued selection.}
	$\Delta_\eta(\xb) \in \prox{g}(\xb - \eta\nabla f(\xb)) - \xb$.
\end{assum}

The proximal Lipschitz assumption is motivated by the special case where $g \equiv 0$ and hence $\Delta_\eta(\xb) = -\eta\nabla f(\xb)$ is $\eta$-Lipschitz, thanks to \Cref{assum:smooth}. As we have seen in previous sections, Lipschitz continuity plays a crucial role in our proof where a major difficulty is to control the inconsistencies among different worker machines due to communication delays. Similarly here, the proximal Lipschitz property, as we show next, allows us to remove the sufficient decrease property in \Cref{assum:sd}---the seemingly strong assumption that we needed in proving our main result \Cref{thm:finite}. 

Let us first present a quick justification for \Cref{assum:EL}.
\begin{lemma}\label{thm:lcg}
Suppose the functions $f$ and $g$ both have Lipschitz continuous gradient, then \Cref{assum:EL} holds for any sequence $\{\xb(t)\}$.
\end{lemma}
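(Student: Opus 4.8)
The plan is to derive a Lipschitz estimate for the map $\xb \mapsto \prox{g}(\xb - \eta \nabla f(\xb))$ directly from the optimality conditions of the two proximal subproblems and the assumed Lipschitz continuity of $\nabla f$ and $\nabla g$. Write $\yb = \xb(t)$, $\yb' = \xb(t+1)$, and set $\wb = \prox{g}(\yb - \eta \nabla f(\yb))$, $\wb' = \prox{g}(\yb' - \eta \nabla f(\yb'))$, so that $\Delta_\eta(\yb) = \wb - \yb$ and $\Delta_\eta(\yb') = \wb' - \yb'$. It then suffices to bound $\|(\wb - \wb') - (\yb - \yb')\|$ by $L_\eta \|\yb - \yb'\|$ with $L_\eta \to 0$. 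The natural route is to first show $\|\wb - \wb'\| \le c(\eta)\|\yb - \yb'\|$ for a constant $c(\eta) \to 1$ as $\eta \to 0$, from which $\|(\wb-\wb')-(\yb-\yb')\| \le (1 + c(\eta))\|\yb-\yb'\|$ — but that only gives $L_\eta \to 2$, not $o(1)$. So a more careful cancellation is needed.

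The key step is to exploit that $\prox{g}$ is the resolvent $(I + \eta \partial g)^{-1}$, and when $g$ has $L_g$-Lipschitz gradient this resolvent is not merely nonexpansive but close to the identity in the following sense: the optimality conditions read $\wb + \eta\nabla g(\wb) = \yb - \eta\nabla f(\yb)$ and $\wb' + \eta\nabla g(\wb') = \yb' - \eta\nabla f(\yb')$. Subtracting gives $(\wb - \wb') + \eta(\nabla g(\wb) - \nabla g(\wb')) = (\yb - \yb') - \eta(\nabla f(\yb) - \nabla f(\yb'))$, hence
\begin{align*}
(\wb - \wb') - (\yb - \yb') = -\eta(\nabla g(\wb) - \nabla g(\wb')) - \eta(\nabla f(\yb) - \nabla f(\yb')).
\end{align*}
Taking norms and using $L$-Lipschitzness of $\nabla f$ and $L_g$-Lipschitzness of $\nabla g$ yields $\|(\wb-\wb')-(\yb-\yb')\| \le \eta L_g \|\wb - \wb'\| + \eta L \|\yb - \yb'\|$. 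It then remains to control $\|\wb - \wb'\|$: from the same identity, $\|\wb - \wb'\| \le \|\yb-\yb'\| + \eta L_g\|\wb-\wb'\| + \eta L\|\yb-\yb'\|$, so for $\eta < 1/L_g$ we get $\|\wb-\wb'\| \le \frac{1+\eta L}{1-\eta L_g}\|\yb-\yb'\|$. Substituting back gives
\begin{align*}
\|\Delta_\eta(\yb) - \Delta_\eta(\yb')\| \le \left(\eta L_g\cdot\tfrac{1+\eta L}{1-\eta L_g} + \eta L\right)\|\yb-\yb'\| =: L_\eta\|\yb-\yb'\|,
\end{align*}
and clearly $L_\eta = O(\eta) \to 0$ as $\eta \to 0$, which is exactly \Cref{assum:EL}. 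Note this argument is completely insensitive to convexity, since it only uses the defining resolvent identity (valid for any single-valued selection of the proximal map once $\eta$ is small enough that the subproblem has a well-defined minimizer, guaranteed by \Cref{assum:bb}) and the two gradient-Lipschitz bounds.

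The main obstacle — really the only subtlety — is handling the multi-valuedness of $\prox{g}$ when $g$ is nonconvex: one must argue that for $\eta$ small the first-order stationarity condition $\wb + \eta\nabla g(\wb) \ni \yb - \eta\nabla f(\yb)$ holds for the chosen selection, and that the above chain of inequalities is self-consistent (the implicit bound on $\|\wb-\wb'\|$ requires $\eta L_g < 1$). Since $g$ has a Lipschitz gradient it is in particular $C^1$, so $\partial g = \{\nabla g\}$ is single-valued and the resolvent identity is an honest equation, not an inclusion; the only care needed is that the proximal subproblem's argmin is attained, which follows from \Cref{assum:bb} (so that $g$ decreases slower than a quadratic) for all sufficiently small $\eta$, as already noted after \Cref{def:prox_map}. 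With that in hand the calculation above is routine.
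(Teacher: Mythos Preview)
Your proposal is correct and follows essentially the same route as the paper: both write the optimality condition $\wb + \eta\nabla g(\wb) = \yb - \eta\nabla f(\yb)$ for the proximal map, subtract the two instances, apply the Lipschitz bounds on $\nabla f$ and $\nabla g$, and rearrange under the restriction $\eta < 1/L_g$. Your final constant $\eta L_g\cdot\tfrac{1+\eta L}{1-\eta L_g} + \eta L$ in fact simplifies exactly to the paper's $\tfrac{\eta(L+L_g)}{1-\eta L_g}$, so the two arguments are identical up to the order in which the triangle inequality is applied.
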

\begin{proof}
Let us denote $L_f$ and $L_g$ as the Lipschitz constant of the gradient $\nabla f$ and $\nabla g$, respectively. Since $\Delta_\eta(\xb) \in \prox{g}(\xb - \eta \nabla f(\xb)) - \xb$, using the optimality condition for the proximal map, see for instance \cite[Proposition 7(iii)]{YuZMX15}, we have
$$
\xb + \Delta_\eta(\xb) + \eta \nabla g\big(\xb + \Delta_\eta(\xb)\big) = \xb - \eta \nabla f(\xb),
$$
and similarly
$$
\zb + \Delta_\eta(\zb) + \eta \nabla g\big(\zb + \Delta_\eta(\zb)\big) = \zb - \eta \nabla f(\zb).
$$
Subtracting one inequality from another, we obtain
\begin{align*}
\|\Delta_\eta(\xb) - \Delta_\eta(\zb)\|  &=  \|\eta \nabla g\big(\zb + \Delta_\eta(\zb)\big) - \eta \nabla g\big(\xb + \Delta_\eta(\xb)\big) + \eta \nabla f(\zb) - \eta \nabla f(\xb)\|
\\
&\leq
\eta L_g \| \zb - \xb + \Delta_\eta(\zb) - \Delta_\eta(\xb) \| + \eta L_f \|\zb - \xb\|
\\
&\leq
\eta L_g \| \Delta_\eta(\zb) - \Delta_\eta(\xb) \| + \eta (L_f+L_g) \|\zb - \xb\|.
\end{align*}
Rearranging we obtain
$$
\|\Delta_\eta(\xb) - \Delta_\eta(\zb)\| \leq \frac{\eta(L_f+L_g)}{1-\eta L_g} \|\zb-\xb\|,
$$
when $0 < \eta < 1/L_g$. Clearly, when $\eta$ is mall, the leading coefficient $\tfrac{\eta(L_f+L_g)}{1-\eta L_g} \in \mathcal{O}(\eta) \subseteq o(1)$, and our proof is complete.
\end{proof}
It is clear that \Cref{thm:lcg} captures the motivating case $g\equiv 0$, but also many other important functions, such as the widely-used regularization function $g = \|\cdot\|_p^p$ for any $p > 1$.
We can now continue with our next result in this section. 

\begin{theorem}\label{thm:main}
	Let \Cref{assum:bb,assum:smooth,assum:separable,assum:delay,assum:always} hold for \mspg, and let $F$ satisfy the \KL property in \Cref{def:KL}. Fix any $r > 1$ with $C = \tfrac{r^{s+1}-1}{r-1}$ and step size $\eta$ such that 
	$\eta < \frac{1}{L(1+2\sqrt{p}C+2\sqrt{p}s)}$. If for each local sequence $\{\xb^i(t)\}$ generated by \mspg, \Cref{assum:EL} holds with $L_\eta \leq \frac{r^2-1}{2pr^2C^2}$, and the global sequence $\{\xb(t)\}$ is bounded, then the finite length properties in \eqref{global_finite} and \eqref{local_finite} hold. In particular, $\{\xb(t)\}$ and $\{\xb^i(t)\}_{i=1}^p$ converge to the same critical point of $F$.
\end{theorem}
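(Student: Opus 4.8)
The plan is to reduce \Cref{thm:main} to \Cref{thm:finite}. First observe that \Cref{assum:always} (every machine updates at every clock) trivially implies \Cref{assum:skip}, and that the prescribed step size $\eta < \tfrac{1}{L(1+2\sqrt{p}C+2\sqrt{p}s)}$ is in particular smaller than $\tfrac{1}{L(1+2\sqrt{p}s)}$; thus all hypotheses of \Cref{thm:finite}, \emph{except} the sufficient decrease property, already hold here. Consequently the whole task is to \emph{derive} \Cref{assum:sd} along the (bounded) trajectory $\{\xb(t)\}$ from the proximal Lipschitz property in \Cref{assum:EL}; once that is in place, \Cref{thm:finite} immediately yields \eqref{global_finite}, \eqref{local_finite}, and the convergence of $\{\xb(t)\}$ and $\{\xb^i(t)\}_{i=1}^p$ to a common critical point of $F$.

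Write $d_t := \|\xb(t+1)-\xb(t)\|^2$. Since $\eta < \tfrac{1}{L(1+2\sqrt{p}s)}$, \Cref{thm:limitcon} already gives $\sum_t d_t < \infty$, hence $d_t \to 0$, which places us in the asymptotic regime where \Cref{assum:EL} applies. The only obstruction to reading \Cref{assum:sd} off from the descent chain culminating in \eqref{eq: progress} is the trailing term $\sqrt{p}L\,\|\xb(t+1)-\xb(t)\|\cdot\sum_{k=(t-s)_+}^{t-1}\|\xb(k+1)-\xb(k)\|$ in \eqref{eq: func_moment}, which mixes the current increment with \emph{past} increments. To dominate it by $-\alpha\,d_t$ it suffices to show that the increments do not drop off too sharply, quantitatively a bound of the form $\sum_{k=(t-s)_+}^{t-1}\|\xb(k+1)-\xb(k)\| \le (C-1)\,\|\xb(t+1)-\xb(t)\|$ for all large $t$, where $C = \tfrac{r^{s+1}-1}{r-1} = 1 + r + \cdots + r^s$ is the geometric sum in the statement.

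Establishing this estimate is the core step, and it is here that \Cref{assum:always} and \Cref{assum:EL} are used. Under \Cref{assum:always} every block is updated at every clock, and since $\tau_i^i(t)\equiv t$ the $i$-th block of the increment coincides with the $i$-th block of $\Delta_\eta(\xb^i(t))$, i.e.\ $x_i(t+1)-x_i(t) = \prox{g_i}\big(x_i(t)-\eta\nabla_i f(\xb^i(t))\big) - x_i(t)$, evaluated with the proximal selection realized by the algorithm. The reverse triangle inequality together with \Cref{assum:EL} applied to the local sequence $\{\xb^i(t)\}$ then gives $\bigl|\,\|x_i(t+1)-x_i(t)\| - \|x_i(t)-x_i(t-1)\|\,\bigr| \le \|\Delta_\eta(\xb^i(t)) - \Delta_\eta(\xb^i(t-1))\| \le L_\eta\,\|\xb^i(t)-\xb^i(t-1)\|$, and \eqref{eq:local_diff} bounds $\|\xb^i(t)-\xb^i(t-1)\|$ by the window sum $\sum_{k=(t-1-s)_+}^{t-1}\|\xb(k+1)-\xb(k)\|$ of global increments. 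The plan is then a strong induction over $t$ proving $\|\xb(t)-\xb(t-1)\| \le r\,\|\xb(t+1)-\xb(t)\|$ for all large $t$: chaining the inductive hypothesis across a window of length $s+1$ collapses the window sum to $(C-1)\,\|\xb(t+1)-\xb(t)\|$, after which squaring, summing over the $p$ blocks, and using Cauchy--Schwarz produces an estimate of the form $d_t \ge (1 - 2pC^2 L_\eta)\,d_{t-1}$; the threshold $L_\eta \le \tfrac{r^2-1}{2pr^2C^2}$ is exactly what guarantees $1 - 2pC^2 L_\eta \ge r^{-2}$, which closes the induction. Substituting the resulting window-sum bound into \eqref{eq: func_moment}, the step size $\eta < \tfrac{1}{L(1+2\sqrt{p}C+2\sqrt{p}s)}$ forces the coefficient of $d_t$ to be strictly negative, so $F(\xb(t+1)) \le F(\xb(t)) - \alpha\,d_t$ for some $\alpha>0$ and all large $t$; \Cref{assum:sd} thus holds, and \Cref{thm:finite} completes the proof.

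The main obstacle is the strong induction in the core step. It is genuinely self-referential, since the admissible growth of $\|x_i(t+1)-x_i(t)\|$ is bounded through $\|\xb^i(t)-\xb^i(t-1)\|$, hence through the very increments being controlled; one must therefore be careful about the base case (using $d_t\to 0$ and the ``for all large $t$'' clause of \Cref{assum:EL}) and about bookkeeping the constants so that the $L_\eta$-threshold indeed closes the loop. A minor point to be handled separately is the degenerate case $d_t = 0$: under \Cref{assum:always} and \Cref{assum:delay}, once $s+2$ consecutive iterates coincide the whole sequence is eventually constant and the conclusion is trivial, so one may assume $d_t > 0$ throughout (or pass to a limiting argument) while running the induction.
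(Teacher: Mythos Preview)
Your overall strategy---reduce to \Cref{thm:finite} by verifying \Cref{assum:sd}, and do so by proving the reverse--growth bound $\|\xb(t)-\xb(t-1)\|\le r\,\|\xb(t+1)-\xb(t)\|$ via an induction driven by \Cref{assum:EL}---is exactly the paper's approach. The gap is in how the induction gets started. You assert that the base case comes from ``$d_t\to 0$ and the `for all large $t$' clause of \Cref{assum:EL}'', but $d_t\to 0$ gives no such information: if, say, the increments decay geometrically faster than $r^{-1}$, then $\|\xb(t)-\xb(t-1)\|\le r\,\|\xb(t+1)-\xb(t)\|$ fails for \emph{every} $t$, and your induction never starts. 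There is simply no way to anchor the induction from asymptotic smallness of the increments alone.

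The paper handles this with a dichotomy that you are missing. Either $\sum_{k=(t-s)_+}^{t}\|\xb(k+1)-\xb(k)\|\ge C\,\|\xb(t+1)-\xb(t)\|$ holds for all large $t$, in which case a direct telescoping argument (summing and using $C>s+1$) yields the finite length property \emph{without} invoking \KL{} or sufficient decrease at all; or that inequality fails at some $\hat t$, and \emph{its failure at $\hat t$} is precisely the seed for the induction, since combined with \eqref{eq:induction} it gives $\|\xb(\hat t+1)-\xb(\hat t)\|\le r\,\|\xb(\hat t+2)-\xb(\hat t+1)\|$. From that single anchor the paper propagates \eqref{eq:ind} forward, but even this requires extra care: for $t\in[\hat t,\hat t+s]$ the window $[t-s,t]$ reaches back before $\hat t$ where no inductive hypothesis is available, so one must splice the original failure at $\hat t$ with the already--established instances of \eqref{eq:ind}, incurring a factor $C^2$ rather than $C$ (this is why the threshold on $L_\eta$ has $C^2$). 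Your writeup needs to incorporate this case split and the two--regime inductive step; the remark about the degenerate case $d_t=0$ is a side issue and does not address the real obstruction.
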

\begin{proof}
%
%
%
%
Using the elementary inequality $\|a\|^2 - \|b\|^2 \le 2\|a\|\|a - b\|$, we have for all $t$:
\begin{flalign}
\|\xb(t+1) - &\xb(t)\|^2 - \left\|\xb(t+2) - \xb(t+1)\right\|^2 \nonumber\\
&\le 2\left\|\xb(t+1) - \xb(t)\right\| \cdot \left\|(\xb(t+1) - \xb(t)) - (\xb(t+2) - \xb(t+1))\right\|
 \nonumber\\
 &\leq  2\left\|\xb(t+1) - \xb(t)\right\| \cdot \sum_{i=1}^{p}\left\|(x_i(t+1) - x_i(t)) - (x_i(t+2) - x_i(t+1))\right\|\nonumber\\
&\ovset{(i)}{\leq}  2\left\|\xb(t+1) - \xb(t)\right\| \cdot \sum_{i=1}^{p} \left \|\Delta_\eta(\xb^{i} (t)) - \Delta_\eta(\xb^{i} (t+1))\right\|\nonumber\\
&\ovset{(ii)}{\le} 2\left\|\xb(t+1) - \xb(t)\right\| \left(\sum_{i=1}^{p} L_\eta \|\xb^{i} (t) - \xb^{i} (t+1)\| \right)\nonumber\\
&\ovset{(iii)}{\le}  2pL_\eta \left\|\xb(t+1) - \xb(t)\right\|  \cdot \sum_{k=(t-s)_+}^{t}\|\xb(k+1) - \xb(k)\|,  \label{eq:induction}
\end{flalign}
where (i) is due to \Cref{assum:always} hence $t\in T_i$ for all $t$, (ii) follows from \Cref{assum:EL}, and (iii) is due to \eqref{eq:local_diff}.

If for some $r > 1$ there exists some $T$ such that for all $t \geq T$,
\begin{align}
\label{eq:case}
\sum_{k=( t-s)_+}^{ t}\|\xb(k+1) - \xb(k)\| \ge C \|\xb(t+1) - \xb(t)\|,
\end{align}
where $C=\frac{r^{s+1}-1}{r-1} > s+1$ (since $r > 1$ and w.l.o.g. $s > 0$). Summing the index $t$ from $T$ to $n$ yields
\begin{flalign}
C\sum_{t= T}^{n}\left\|\xb(t+1) - \xb(t)\right\| &\leq \sum_{t=T}^{n}\sum_{k=(t-s)_+}^{t}\left\|\xb(k+1) - \xb(k)\right\|\nonumber\\
&\leq (s+1)\sum_{t=(T-s)_+}^{n}\left\|\xb(t+1) - \xb(t)\right\|,\nonumber
\end{flalign}
which after rearranging terms becomes
\begin{flalign}
(C -  s - 1)\sum_{t= T}^{n}\left\|\xb(t+1) - \xb(t)\right\| \leq (s+1)\sum_{t=(T-s)_+}^{T-1}\left\|\xb(t+1) - \xb(t)\right\|. \nonumber
\end{flalign}
Since the right hand side does not depend on $n$, letting $n$ tend to infinity we conclude
\begin{flalign}\label{eq: case2}
\sum_{t=0}^{\infty}\|\xb(t+1) - \xb(t)\|< \infty,
\end{flalign}
and the proof of the finite length property would be complete.


Therefore, in the remaining part of the proof, we can assume \eqref{eq:case} fails for infinitely many $t$. Take any such $t=\hat t$, we have 
\begin{align}
\label{eq:induction2}
\sum_{k=( t-s)_+}^{ t}\|\xb(k+1) - \xb(k)\| \leq C \|\xb(t+1) - \xb(t)\| \leq C^2 \|\xb(t+1) - \xb(t)\|,
\end{align}
since $C > 1$. 
Combining \eqref{eq:induction} and \eqref{eq:induction2} we have for $t=\hat t$:
\begin{flalign}
\left\|\xb(t+1) - \xb(t)\right\|^2 - \left\|\xb(t+2) - \xb(t+1)\right\|^2 
&\le 2pL_\eta C^2 \left\|\xb(t+1) - \xb(t)\right\|^2 \nonumber\\
&\leq \left(1-\frac{1}{r^2} \right) \left\|\xb(t+1) - \xb(t)\right\|^2,\nonumber
\end{flalign}
if $\eta$ is small enough (recall that $L_\eta = o(1)$).
After rearranging terms we conclude that for $t=\hat t$:
\begin{align}
\label{eq:ind}
\|\xb(t+1) - \xb(t)\| \le r\|\xb(t+2) - \xb(t+1)\|.
\end{align}
Using induction we can continue the same process for any $t \geq \hat t$. Indeed, suppose \eqref{eq:ind} is true for any $t \leq m-1$, then \eqref{eq:induction} holds (for any $t$), and \eqref{eq:induction2} also holds: If $m \leq \hat t + s$, then
\begin{align*}
\sum_{k=(m-s)_+}^{m}\!\!\! \|\xb(k\!+\!1) \!-\! \xb(k)\| 
&=
\sum_{k=(m-s)_+}^{\hat t}\|\xb(k+1) - \xb(k)\| + \sum_{k=\hat t+1}^{m}\|\xb(k+1) - \xb(k)\| 
\\
&\ovset{(i)}{\leq} 
\sum_{k=(\hat t-s)_+}^{\hat t} \!\! \|\xb(k\!+\!1) \!-\! \xb(k)\| + \sum_{k=\hat t+1}^{m} \!\!r^{m-k} \|\xb(m\!+\!1) \!-\! \xb(m)\| 
\\
&\ovset{(ii)}{\leq}  
C\left[\|\xb(\hat t+1) - \xb(\hat t)\| + \sum_{k=\hat t+1}^{m} r^{m-k} \|\xb(m+1) - \xb(m)\|\right] 
\\
&\ovset{(iii)}{\leq} 
C \sum_{k=\hat t}^{m} r^{m-k} \|\xb(m+1) - \xb(m)\| \\
&\ovset{(iv)}{\leq} 
C^2 \|\xb(m+1) - \xb(m)\|,
\end{align*}
where (i) is due to the induction hypothesis, (ii) is due to the definition of $\hat t$ and the fact that $C>1$, (iii) is due to again the induction hypothesis, and finally (iv) is due to the definition of $C$ (recall $m\leq \hat t +s$). If $m > \hat t + s$, the same inequality, with $C^2$ replaced by $C$, would still hold (essentially dropping all the first terms on the right hand side of the above inequalities). Thus, \eqref{eq:induction} and \eqref{eq:induction2} would imply again \eqref{eq:ind} for $t = m$.

Lastly, we recall from \cref{eq: func_moment} that for large $t$, 
\begin{flalign}
F\big(\xb(t+1)\big) - F\big(\xb(t)\big) 
&\le 
\tfrac{1}{2}(L - 1/\eta) \|\xb(t+1) - \xb(t)\|^2 \nonumber\\
&\quad+ \sqrt{p}L\|\xb(t+1)-\xb(t)\| \sum_{k=(t-s)_+}^{t-1} \|\xb(k+1) - \xb(k) \|. \nonumber
\\
&\leq
\tfrac{1}{2}(L - 1/\eta) \|\xb(t+1) - \xb(t)\|^2  + \sqrt{p}CL\|\xb(t+1)-\xb(t)\|^2. \nonumber
\\
&\leq
-\alpha \|\xb(t+1) - \xb(t)\|^2, \nonumber
\end{flalign}
where $\alpha = \tfrac{1}{2}(1/\eta - L - 2\sqrt{p}CL) > 0$ if $\eta$ is small. Hence, the sufficient decrease property in \Cref{assum:sd} is verified and the finite length properties follow from \Cref{thm:finite}.
\end{proof}

Lastly, we show that \Cref{assum:EL} also holds for the important cardinality function $\|\xb\|_0$ (number of nonzero entries).
\begin{lemma}\label{thm:example}
	Consider the same setting as in \Cref{thm:limitcon}, 
	then \Cref{assum:EL} holds for any function $f$ and $g = \|\cdot\|_0$ on all local sequences $\{\xb^i(t)\}$ of \mspg.	
	\end{lemma}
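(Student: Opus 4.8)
The plan is to use the closed form of the proximal map of $\|\cdot\|_0$. For $g=\|\cdot\|_0$ with parameter $\eta$, \Cref{def:prox_map} shows that $\prox{g}$ acts coordinatewise as hard‑thresholding at level $\sqrt{2\eta}$: $[\prox{g}(\yb)]_j=y_j$ if $|y_j|>\sqrt{2\eta}$ and $[\prox{g}(\yb)]_j=0$ if $|y_j|<\sqrt{2\eta}$ (the tie $|y_j|=\sqrt{2\eta}$ broken arbitrarily). Writing $\yb^i(t):=\xb^i(t)-\eta\nabla f(\xb^i(t))$, we therefore get the per‑coordinate dichotomy
\begin{align*}
\big[\Delta_\eta(\xb^i(t))\big]_j=\big[\prox{g}(\yb^i(t))\big]_j-\big[\xb^i(t)\big]_j=\begin{cases}-\eta\big[\nabla f(\xb^i(t))\big]_j, & \text{if }\big|[\yb^i(t)]_j\big|>\sqrt{2\eta},\\ -\big[\xb^i(t)\big]_j, & \text{if }\big|[\yb^i(t)]_j\big|<\sqrt{2\eta}.\end{cases}
\end{align*}
The first branch is benign: on a coordinate that falls in the first branch at both $t$ and $t+1$, the increment of $\Delta_\eta$ is $-\eta\big([\nabla f(\xb^i(t))]_j-[\nabla f(\xb^i(t+1))]_j\big)$, and these sum up to at most $\eta L\|\xb^i(t)-\xb^i(t+1)\|$ by \Cref{assum:smooth}. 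So the whole proof reduces to showing that, for all large $t$, every coordinate lies in the first branch \emph{precisely} on the support $S:=\{j:[\xb^i(t)]_j\neq 0\}$, so that the second branch only occurs at coordinates that are identically zero at $t$ and $t+1$ (hence contribute nothing); then $\Delta_\eta(\xb^i(t))$ equals $-\eta\nabla f(\xb^i(t))$ restricted to $S$ and zero off $S$, and \Cref{assum:EL} follows with $L_\eta=\eta L\in o(1)$.

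To reach that, I would establish three structural facts. (a) By the update rule in \eqref{mspg_formulation}, every component of $\xb^i(t)$ (and of $\xb(t)$) is the output of a hard‑thresholding step performed by its owning machine at a clock lying within $2s$ of $t$ (using \Cref{assum:delay,assum:skip}); hence for all large $t$ and every $j$, $\big|[\xb^i(t)]_j\big|\in\{0\}\cup[\sqrt{2\eta},\infty)$. (b) \Cref{thm:limitcon} gives $\|\xb(t+1)-\xb(t)\|\to 0$ and $\|\xb(t)-\xb^i(t)\|\to 0$, hence also $\|\xb^i(t+1)-\xb^i(t)\|\to 0$ (\cf \eqref{eq:local_diff}); combined with (a) and the fixed gap $\sqrt{2\eta}$ separating $0$ from the rest of the admissible range, this forces $S$ to be eventually constant and to coincide with the eventual support of $\xb(t)$, since an increment of size $<\sqrt{2\eta}$ cannot move a coordinate across the gap. (c) For $j\in S$ and $k$ large, an update of the owning machine at clock $k$ keeps coordinate $j$, so $[\xb(k+1)]_j-[\xb(k)]_j=-\eta$ times the corresponding partial gradient of $f$ at that machine's local model, whose magnitude is $\le\|\xb(k+1)-\xb(k)\|\to 0$; transporting this along the vanishing inconsistencies from \Cref{thm:limitcon} yields $\eta\big|[\nabla f(\xb^i(t))]_j\big|\to 0$ for every $j\in S$.

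With (a)--(c), the matching of the first branch with the support is nearly immediate off the support: for $j\notin S$ we have $[\xb^i(t)]_j=0$, so $[\yb^i(t)]_j=-\eta[\nabla f(\xb^i(t))]_j$, which falls in a branch consistently in $t$ once it is bounded away from $\sqrt{2\eta}$, and in the first branch it contributes only through a gradient difference. The remaining, and genuinely delicate, point is that every $j\in S$ must lie in the first branch for all large $t$: a priori $\big|[\xb^i(t)]_j\big|$ could converge to the threshold $\sqrt{2\eta}$ from above while the vanishing gradient term nudges $\big|[\yb^i(t)]_j\big|$ just below $\sqrt{2\eta}$, producing a spurious second‑branch coordinate with $\big|[\Delta_\eta(\xb^i(t))]_j\big|=\big|[\xb^i(t)]_j\big|\approx\sqrt{2\eta}$ that no $o(1)$ constant can absorb. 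I expect this to be the main obstacle. The way I would close it is to recall that $[\xb^i(t)]_j$ was itself \emph{produced} by a hard‑thresholding step that kept coordinate $j$ (because $j\in S$), so that step's pre‑threshold magnitude was $\ge\sqrt{2\eta}$; combining this with the stability of the support (a zeroing at any late clock would shrink the support, a contradiction) and with fact (c) should force $\big|[\yb^i(t)]_j\big|=\big|[\xb^i(t)]_j-\eta[\nabla f(\xb^i(t))]_j\big|\ge\sqrt{2\eta}$ for all large $t$, ruling out the spurious case. The symmetric boundary issue off the support is handled the same way (or is avoided outright if one additionally invokes boundedness of $\{\xb^i(t)\}$, which is available whenever \Cref{assum:EL} is actually used).
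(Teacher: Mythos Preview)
Your approach is essentially the paper's: the hard-thresholding form of $\prox{\|\cdot\|_0}$; the fact that each coordinate of $\xb^i(t)$ lies in $\{0\}\cup(\sqrt{2\eta},\infty)$ in magnitude (being itself a thresholding output); support stabilization via $\|\xb^i(t+1)-\xb^i(t)\|\to 0$ from \Cref{thm:limitcon}; and then, on the stable support, the prox acting as the identity so that the increment of $\Delta_\eta$ reduces to an $\eta$-scaled gradient difference bounded via \Cref{assum:smooth}. The paper's proof is brief: after noting the support $\Omega$ stabilizes, it simply restricts the coordinate sum to $j\in\Omega$ and asserts (step (ii)) that on $\Omega$ the prox passes its argument through, arriving at $L_\eta=\eta pL$.

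You go further and flag the boundary obstacle on the support; that is a real subtlety the paper does not discuss. But your sketched resolution does not close it. The key mismatch is that $\Delta_\eta(\xb^i(t))$ thresholds $x_j^i(t)-\eta\nabla_j f(\xb^i(t))$ with the gradient taken at machine $i$'s local model, whereas the algorithm has machine $j$ threshold $x_j(k)-\eta\nabla_j f(\xb^j(k))$ with its \emph{own} local model. Support stability of $\{\xb^i(t)\}$ (equivalently of $\{\xb(t)\}$) only guarantees the latter pre-threshold magnitude exceeds $\sqrt{2\eta}$; a ``zeroing'' in the hypothetical $\Delta_\eta$ computation is never performed by the algorithm and produces no contradiction with the stable support. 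Your fact (c) together with the vanishing inconsistencies shows the two pre-threshold quantities differ by $o(1)$, but that is not enough when the algorithm's margin above $\sqrt{2\eta}$ is itself $o(1)$ --- nothing you have established rules out $|x_j^i(t)|\downarrow\sqrt{2\eta}$. So the point you rightly call ``the main obstacle'' is not settled by your argument; in fairness, the paper's own step (ii) glosses over exactly the same point without comment.
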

\begin{proof}



The crucial observation here is that for the cardinality function $g = \|\cdot\|_0$, its proximal map on the $j$-th entry can be chosen as:
\begin{flalign}
\label{eq:prox_l0}
\mathrm{prox}_{g_j}^{\eta}(z_j) =  
\left\{ \begin{array}{l}
z_j, ~\text{if $|z_j|>\sqrt{2\eta}$}\\
0, ~\text{otherwise}
\end{array} \right.
.
\end{flalign}
However, \Cref{thm:limitcon} implies that $\lim\limits_{t\to\infty} \|\xb^i(t+1) - \xb^i(t)\| = 0$. Thus, for $t$ sufficiently large, the sequence $\{\xb^i(t)\}$ will have the same support $\Omega$ (indices that have nonzero entries), for otherwise $\|\xb^i(t+1) - \xb^i(t)\| \geq \sqrt{2\eta}$ even if one index in the support changes.
Therefore,
\begin{align*}
\|\Delta_\eta(\xb^i(t+1)) - \Delta_\eta(\xb^i(t))\| 
&\ovset{(i)}{\leq}
\sum_{j\in \Omega}\|\prox{g_j}(x_j^i(t+1) - \eta \nabla_j f(\xb^i(t+1))) - x_j^i(t+1)\nonumber\\
&\qquad\qquad\qquad - \prox{g_j}(x_j^i(t) - \eta \nabla_j f(\xb^i(t))) - x_j^i(t)\|
\\
&\ovset{(ii)}{\leq}
\sum_{j\in \Omega} \|\eta \nabla_j f(\xb^i(t+1)) - \eta \nabla_j f(\xb^i(t)) \|
\\
&\ovset{(iii)}{\leq}
\eta p L \|\xb^i(t+1) - \xb^i(t)\|,
\end{align*}
where (i) is the triangle inequality, (ii) uses the property of the proximal map \eqref{eq:prox_l0}, and (iii) is due to \Cref{assum:smooth}.
\end{proof}

Note that similar results as \Cref{thm:example} can be derived for the rank function, and more generally for functions whose proximal map is discontinuous with pieces satisfying \Cref{thm:lcg}.


\section*{Acknowledgment}
This work is supported in part by the grants AFOSR FA9550-16-1-0077 and NSF ECCS 16-09916.

\section{Conclusion}\label{sec: conclude}
We have proposed \mspg as an extension of the proximal gradient algorithm to the model parallel and partially asynchronous setting. 
\mspg allows worker machines to operate asynchronously as long as they are not too far apart, hence greatly improves the system throughput.
The convergence properties of \mspg are thoroughly analyzed. In particular, we proved that: 1) every limit point of the sequences generated by \mspg is a critical point of the objective function; 2) under an additional error bound condition, the function values decay periodically linearly; 3) under the additional Kurdyka-${\L}$ojasiewicz inequality, the sequences generated by \mspg converge to the same critical point, provided that a proximal Lipschitz condition is satisfied.
In the future we plan to further weaken the proximal Lipschitz condition so that our analysis can handle many more nonsmooth functions.



\bibliographystyle{siamplain}
\bibliography{./ref}
\end{document}